\newtheorem{theorem}{Theorem}[section]
\newtheorem{corollary}[theorem]{Corollary}
\newtheorem{lemma}[theorem]{Lemma}
\newtheorem{question}[theorem]{Question}
\theoremstyle{remark}
\newtheorem{remark}[theorem]{Remark}
\theoremstyle{definition}
\newtheorem{definition}[theorem]{Definition}
\title{Planarity and genus of sparse random bipartite graphs}
\author{Tuan Anh Do$^{*}$, Joshua Erde$^{*}$, and Mihyun Kang$^{*}$}
\address{Institute of Discrete Mathematics,\\ 
Graz University of Technology, \\
Steyrergasse 30,\\
8010 Graz,
Austria.}
\email{\{do,erde,kang\}@math.tugraz.at}
\thanks{Supported by Austrian Science Fund (FWF): I3747, W1230}
\keywords{Random graphs, random bipartite graphs, genus, cycles, faces, components}
\begin{document}

\begin{abstract}
The genus of the binomial random graph $G(n,p)$ is well understood for a wide range of $p=p(n)$. Recently, the study of the genus of the random bipartite graph $G(n_1,n_2,p)$, with partition classes of size $n_1$ and $n_2$, was initiated by Jing and Mohar, who showed that when $n_1$ and $n_2$ are comparable in size and $p=p(n_1,n_2)$ is significantly larger than $(n_1n_2)^{-\frac{1}{2}}$, the genus of the random bipartite graph has a similar behaviour to that of the binomial random graph. 

In this paper we show that there is a threshold for planarity of the random bipartite graph at $p=(n_1n_2)^{-\frac{1}{2}}$ and investigate the genus close to this threshold, extending the results of Jing and Mohar. It turns out that there is qualitatively different behaviour in the case where $n_1$ and $n_2$ are comparable, when whp the genus is linear in the number of edges, than in the case where $n_1$ is asymptotically smaller than $n_2$, when whp the genus behaves like the genus of a sparse random graph $G(n_1,q)$ for an appropriately chosen $q=q(p,n_1,n_2)$.
\end{abstract}

\maketitle

\section{Introduction}
\subsection{Motivation}

The binomial random graph model $G(n,p)$, introduced by Gilbert \cite{Gilbert}, is a random variable distributed on the subgraphs of the complete graph $K_n$, whose distribution is given by including each edge independently with probability $p$. This model, together with the closely related Erd\H{o}s-R\'{e}nyi random graph model $G(n,m)$, has been extensively studied since its introduction (see \cite{Frieze, Bollobas,E-R,Janson}). One particularly striking feature of this model is the `concentration' that it displays for many graph parameters, that is, the value of certain graph parameters in the model $G(n,p)$ are with high probability ({\em whp} for short) determined, at least asymptotically, as a function solely of $n$ and $p$.

The particular graph parameter that we will focus on in this paper is the genus.
The \emph{genus} of a graph $G$ is the smallest genus of an orientable surface on which $G$ can be embedded, in other words, the smallest $g\in \mathbb{N}$ such that $G$ can be embedded on a sphere with $g$ handles attached (see \cite{GT01} for more background on topological graph theory).
We will write $g(G)$ for the genus of $G$.

The genus is a key topological property of a graph, which has applications to the design of graph algorithms (e.g., colouring problems \cite{Mohar2} and the manufacture of electrical circuits \cite{Gibb, Lip}). In addition, recently, results on the genus of random bipartite graphs \cite{Mohar} were used to give a polynomial-time approximation scheme for the genus of dense graphs \cite{Jing}.

The genus of the binomial random graph $G(n,p)$ was first studied by Archdeacon and Grable \cite{Arch}, who showed that for large enough $p$, more precisely if $p^2(1-p^2) \geq \frac{8 \log^4 n}{n}$, whp the genus of $G(n,p)$ is $(1+o(1))\frac{1}{12}pn^2$. Since the number of edges in $G(n,p)$ is whp $(1+o(1))\frac{pn^2}{2}$ when $p = \omega(n^{-2})$, it follows that in the range of $p$ considered by Archdeacon and Grable, whp $g(G(n,p) = (1+o(1)) \frac{1}{6}e(G(n,p))$. R\"{o}dl and Thomas \cite{Rodl} extended these results to show that whp the genus of $G(n,p)$ is $(1+o(1))\frac{i}{2(i+2)}\frac{pn^2}{2}$ whenever\footnote{Here and throughout the paper we will use the notation $f(n) \ll g(n)$ to denote that $\lim_{n \rightarrow \infty} f(n)/g(n) = 0$.}
\[
n^{-\frac{i}{i+1}} \ll  p \ll n^{-\frac{i-1}{i}}  \quad (i\in \mathbb N),
\]
and so for these ranges of $p$, the genus of $G(n,p)$ is also whp asymptotically linear in the number of edges, with the linear factor increasing from $\frac{1}{6}$ to $\frac{1}{2}$ as $i$ increases. More recently, this was extended to even sparser random graphs by Dowden, Kang, and Krivelevich \cite{Kang}.\footnote{They stated their results in the Er\H{o}s-R\'{e}nyi model, but they worked in the binomial random model.}

\begin{theorem}[\cite{Kang}, Theorems 1.1 and 1.2]\label{(1)and(2)}
\
\begin{enumerate}
    \item If $n^{-1} \ll p \ll n^{-1 +o(1)}$, then whp 
$$
g(G(n,p)) = (1+o(1)) \frac{1}{2} \frac{pn^2}{2}.
$$
\item \label{part2} If $p = \frac{d}{n}$ for constant  $d>1$, then whp 
$$g(G(n,p))= (1+o(1))\mu(d)\frac{pn^2}{2}, $$
	where $$\mu(d)=\frac{1}{2}-\frac{1}{d} +\frac{1}{d^2}\sum_{k=1}^\infty\left(d e^{-d}\right)^k \frac{k^{k-2}}{k!}$$
	is an increasing and continuous function on $(1,\infty)$, with 
	$$\lim_{d \rightarrow 1} \mu(d) = 0\quad \text{and}\quad \lim_{d \rightarrow \infty} \mu(d) = \frac{1}{2}.$$
	\end{enumerate}
\end{theorem}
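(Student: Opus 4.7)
My plan is to apply the Euler--Poincar\'e formula
\[
g(G) \;=\; c(G) + \tfrac{1}{2}\bigl(e(G) - v(G) - F(G)\bigr),
\]
where $F(G)$ denotes the maximum number of faces over all orientable 2-cell embeddings of $G$, and $c(G), e(G), v(G)$ are the numbers of components, edges and vertices of $G$. Both claimed estimates reduce to bounds on $F(G(n,p))$: the upper bound on the genus comes from a lower bound on $F$ (exhibiting an embedding with many faces), while the lower bound on the genus comes from an upper bound on $F$.

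For the upper bound on $g$, take the embedding that is planar on each tree and each unicyclic component and arbitrary on the giant component; this gives $F(G) \geq T + 2U + 1$, with $T$ and $U$ the numbers of tree and unicyclic components of $G$. In part~(1), since whp $m = (1+o(1))pn^2/2 \gg n \geq c$, the weaker bound $F \geq c$ already yields $g(G) \leq (m-n+c)/2 \leq (1+o(1))m/2$. In part~(2), standard first- and second-moment computations in $G(n,d/n)$ give whp $T = (1+o(1))n\sum_{k\geq 1}\lambda_k$ and $U = o(n)$, where $\lambda_k := k^{k-2}(de^{-d})^k/(k!\,d)$ is the limiting density of tree components of order $k$; substituting into the Euler--Poincar\'e formula gives precisely $g(G) \leq (1+o(1))\mu(d)m$.

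For the lower bound on $g$, tree and unicyclic components contribute exactly $T$ and $2U$ faces respectively in any 2-cell embedding, so it suffices to show that the number $F_{\mathrm{giant}}$ of faces in any 2-cell embedding of the giant component is $o(m)$ whp (and in particular $o(n)$ in part~(2)). This uses the locally tree-like structure of $G(n,p)$: the expected number of cycles of length at most $L$ is $\sum_{k=3}^L (np)^k/(2k)+o(1)$. In part~(1), since $np = n^{o(1)}$, one can choose $L = L(n) \to \infty$ with $(np)^L = n^{o(1)}$; in part~(2), since $np = d$ is constant, one takes $L = c \log n$ for a small constant $c < 1/\log d$. Faces of length at most $L$ are bounded in number by the count of short closed walks in $G$, which is $o(n)$ whp, while faces of length greater than $L$ number at most $2m/L = o(m)$ since face lengths sum to $2m$; together these give $F_{\mathrm{giant}} = o(m)$.

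The main obstacle is the precise bound on short face boundaries: face walks in a 2-cell embedding are closed walks rather than simple cycles, so bounding their number requires more than counting short cycles. A convenient route is to exploit that each edge is used at most twice by the facial walks, so a face of length $\ell$ is supported on a subgraph with at most $\ell$ edges; such small subgraphs in $G(n,p)$ are rare and can be counted via moment methods. A further delicacy in part~(2) is showing concentration of the tree-component counts $T_k$ with enough precision for the Cayley--tree summation defining $\mu(d)$ to emerge correctly.
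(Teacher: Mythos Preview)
This theorem is not proved in the paper at all: it is quoted verbatim from Dowden, Kang, and Krivelevich \cite{Kang} and used as a black box (most notably in the proof of Theorem~\ref{main3}). So there is no ``paper's own proof'' to compare against.

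That said, your outline is essentially the same strategy the present paper employs for its \emph{bipartite} analogues (Theorems~\ref{main2} and~\ref{main4}): apply Euler's formula, note that $e(G)$ and $v(G)$ are trivially controlled, bound the number of faces via a short-cycle count plus an edge-counting argument for long faces (cf.\ Lemma~\ref{l:cyclebound} and Corollaries~\ref{lemma8.1}, \ref{corollary5.2}), and estimate $\kappa(G)$ by showing almost all components are trees and computing the expected number of small tree components via Cayley's formula and first/second moments (cf.\ Lemma~\ref{lemma3.4}). Your sketch would adapt cleanly to recover the cited result, and the ``obstacle'' you flag about facial walks versus simple cycles is handled in practice exactly as you suggest, by bounding short faces by short cycles (each face of length $\ell$ contains a cycle of length at most $\ell$) and long faces by $2e(G)/L$.

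One minor bookkeeping point: your Euler formula $g = c + \tfrac12(e-v-F)$ and the paper's $g = \tfrac12(e - v - f + \kappa + 1)$ differ by a normalisation of how faces are counted across components; this is irrelevant asymptotically since the discrepancy is $O(\kappa)$, which is absorbed into the main terms in both regimes.
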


They also gave some results for the genus in the weakly supercritical regime. Note that it is relatively easy to show that when $p = \frac{1-\epsilon}{n}$ for any fixed positive $\epsilon$, whp $G(n,p)$ is planar, i.e., $g(G(n,p))=0$. 

So, in the binomial random graph model the genus is relatively well understood: the threshold for planarity occurs at $p = \frac{1}{n}$, when $p = \frac{d}{n}$ for $d>1$ the genus is whp linear in the number of edges, where this linear factor increases continuously as a function of $d$ from $0$ to $\frac{1}{2}$, and for larger values of $p$ the genus is also whp linear in the number of edges, where this linear factor decreases from $\frac{1}{2}$ to $\frac{1}{6}$ via a series of phases transitions at $p=n^{\frac{-i}{i+1}}$ for each $i\in \mathbb N$.

In this paper we will be interested in the corresponding question in random bipartite graphs. The \emph{binomial random bipartite graph model} $G(n_1,n_2,p)$ is a random variable distributed on the subgraphs of the complete bipartite graph $K_{n_1,n_2}$, which has partition classes $N_1$ and $N_2$ of size $n_1$ and $n_2$ respectively, whose distribution is given by including each edge between $N_1$ and $N_2$ independently with probability $p$. Recently, Jing and Mohar \cite{Mohar} gave an analogue of R\"{o}dl and Thomas' result in this model.

\begin{theorem}[\cite{Mohar}, Theorem 1.3]\label{theorem1.2}
If there exist a positive constant $c$ and $i \in \mathbb{N}$ such that $\frac{1}{c} \leq \frac{n_1}{n_2} \leq c$ and 
\[
(n_1n_2)^{-\frac{i}{2i+1}} \ll p \ll (n_1n_2)^{-\frac{i-1}{2i-1}},
\]
then whp
\[
g(G(n_1,n_2,p)) = (1+o(1)) \frac{i}{2(i+1)}pn_1n_2.
\]
\end{theorem}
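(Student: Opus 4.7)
The plan is to prove matching asymptotic lower and upper bounds on $g(G(n_1,n_2,p))$. Write $G = G(n_1,n_2,p)$ throughout. The assumption $p \gg (n_1n_2)^{-i/(2i+1)}$ together with $n_1 \asymp n_2$ readily gives $pn_1n_2 \gg (n_1n_2)^{(i+1)/(2i+1)} \gg n_1+n_2$, so by Chernoff $e(G)=(1+o(1))pn_1n_2$ whp, and the average degrees $pn_1, pn_2$ grow polynomially, so $G$ is whp connected.

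For the \textbf{lower bound}, the plan is to exploit bipartiteness together with the sparsity of short cycles. A first-moment calculation shows that the expected number of $2k$-cycles in $G$ is
\[
\binom{n_1}{k}\binom{n_2}{k}\frac{k!(k-1)!}{2}p^{2k} \;=\; \Theta\!\left(\tfrac{(n_1n_2p^2)^k}{k}\right).
\]
Since $n_1n_2p^2\gg(n_1n_2)^{1/(2i+1)}\to\infty$ in this regime, the sum $\sum_{k=2}^{i}(n_1n_2p^2)^k$ is dominated by its top term, and the hypothesis $p \ll (n_1n_2)^{-(i-1)/(2i-1)}$ exactly gives $(n_1n_2p^2)^i = o(pn_1n_2)$. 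Markov's inequality then yields that whp the total number of cycles of length at most $2i$ in $G$ is $o(pn_1n_2)$. Deleting one edge from each such short cycle produces a subgraph $G'\subseteq G$ with $e(G')=(1-o(1))pn_1n_2$ and girth at least $2(i+1)$. In any $2$-cell embedding of $G'$, each face has length at least $2(i+1)$, so $2e(G')\ge 2(i+1)f$, and Euler's formula rearranges to
\[
g(G')\;\ge\;1+\tfrac{1}{2}\!\left(\tfrac{i}{i+1}e(G')-v(G')\right)\;\ge\;(1-o(1))\tfrac{i}{2(i+1)}pn_1n_2.
\]
The monotonicity $g(G)\ge g(G')$ completes the lower bound.

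For the \textbf{upper bound}, the goal is to exhibit an embedding of $G$ attaining this value. The Euler bound is tight precisely when almost all faces have length exactly $2(i+1)$, i.e.\ when $E(G)$ is (almost) decomposed into $2(i+1)$-cycles serving as facial walks. Heuristically this is plausible: the expected number of $2(i+1)$-cycles through a fixed edge of $G$ is $\Theta((n_1n_2p^2)^{i}) \to \infty$ under $p\gg(n_1n_2)^{-i/(2i+1)}$. My plan is therefore to extract a near-perfect packing of edge-disjoint $2(i+1)$-cycles covering all but $o(pn_1n_2)$ edges of $G$ via a Pippenger--Spencer-type nibble applied to the auxiliary hypergraph on $E(G)$ whose hyperedges are the edge sets of $2(i+1)$-cycles of $G$. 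Starting from a plane embedding of a spanning tree, the cycles of the packing can then be added one by one as new bounded faces of length $2(i+1)$, each costing at most one handle, and the $o(pn_1n_2)$ leftover edges can be added last, each increasing the genus by at most $1$, yielding $g(G)\le(1+o(1))\tfrac{i}{2(i+1)}pn_1n_2$.

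The hard part will clearly be the upper bound, and in particular the step of realising the cycle packing as a set of faces of a single embedding. The packing itself requires verifying the codegree-regularity conditions of a R\"odl-nibble-type theorem, which should follow from first- and second-moment estimates but needs care. Turning the packing into simultaneous faces is more delicate, since a prescribed collection of cycles can be realised as faces only if they induce a consistent rotation system at each vertex; overcoming this will likely require pre-processing $G$ to prune atypical vertices (those of unusually large degree, or lying in too many short cycles) before running both the packing and the face-by-face embedding, and dealing with the pruned edges as part of the $o(pn_1n_2)$ leftover.
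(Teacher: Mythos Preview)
This theorem is not proved in the present paper at all: it is quoted from Jing and Mohar as background (Theorem~1.3 of \cite{Mohar}), so there is no ``paper's own proof'' to compare against. That said, a few comments on your sketch.

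Your lower bound is correct and is the standard argument: after destroying the $o(pn_1n_2)$ cycles of length $\le 2i$, every face of a minimum-genus embedding has length $\ge 2(i+1)$, and Euler's formula gives the claimed bound.

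Your upper bound, however, has a genuine conceptual gap, beyond the difficulties you already flag. An edge-disjoint packing of $2(i+1)$-cycles covering (almost) all of $E(G)$ contains only about $\frac{e(G)}{2(i+1)}$ cycles. Even if you succeeded in making every cycle of such a packing a facial cycle of some embedding, that would guarantee only $f \ge \frac{e(G)}{2(i+1)}$, and Euler's formula would then give
\[
g(G) \le 1 + \tfrac{1}{2}\Big(e(G) - v(G) - \tfrac{e(G)}{2(i+1)}\Big) \approx \tfrac{2i+1}{4(i+1)}\,e(G),
\]
which is strictly larger than the target $\tfrac{i}{2(i+1)}e(G)$ for every $i\ge 1$. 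The point is that the facial walks of an embedding \emph{double}-cover $E(G)$, so what you actually need is (close to) a cycle \emph{double} cover by $2(i+1)$-cycles, with the additional property that at every vertex the cycles through it fit together into a single cyclic rotation. A single near-perfect edge-disjoint packing is only half of this structure and cannot by itself certify the right upper bound. Your ``add each cycle as a face at the cost of one handle'' heuristic is accordingly off: the handle count $\frac{e(G)}{2(i+1)}$ it suggests even undershoots the lower bound for $i\ge 2$, which shows the construction cannot work as stated. The actual proof in \cite{Mohar} builds the required rotation systems directly rather than via a cycle packing.
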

So, again, as long as the two partition classes are comparable in size, if $p$ is much larger than $(n_1n_2)^{-\frac{1}{2}}$, then the genus of $G(n_1,n_2,p)$ is whp asymptotically linear in the number of edges, where the linear factor increases as $p$ decreases, going from $\frac{1}{4}$ (when $p$ is constant) up towards $\frac{1}{2}$. They also gave some results in the case where $n_1 \gg 1$ and $n_2$ is a constant (see \cite{Mohar,JM20}).

It is natural to ask if similar behaviour occurs for even smaller $p$, in particular, as in $G(n,p)$, if the genus is linear in the number of edges just above the threshold for planarity. As we will see, the answer to this question depends on whether $n_1$ and $n_2$ are comparable in size or not.

\subsection{Main results} 
Our first result is that whp the binomial random bipartite graph $G(n_1,n_2,p)$ is planar when $p$ is smaller than $\frac{1}{\sqrt{n_1n_2}}$. We note that this result does not depend on the relationship between $n_1$ and $n_2$, and that our later results will imply that this is in fact a sharp threshold for planarity.

\begin{theorem}\label{main1}
Let $G=G(n_1,n_2,p)$ where $1\ll n_1\leq n_2$. If  $p=\frac{d}{\sqrt{n_1n_2}}$ for constant $d<1$, then whp $g(G)=0$, i.e., $G$ is planar.
\end{theorem}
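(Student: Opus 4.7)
The plan is to apply the first moment method to a class of forbidden subgraphs. A graph in which every connected component has excess $e-v+1$ at most $1$ is planar, since it is a disjoint union of trees and unicyclic graphs, both of which are planar. So it suffices to show that whp $G$ contains no subgraph with excess at least $2$. A standard reduction---look at the $2$-core of such a subgraph and suppress degree-$2$ vertices---shows that $G$ must then contain one of two minimal witnesses as a subgraph: a \emph{theta graph}, consisting of two distinct branch vertices joined by three internally disjoint paths, or a \emph{dumbbell}, consisting of two cycles joined by a path (of length at least $0$, with length $0$ corresponding to the two cycles sharing a single vertex). Both configurations have excess exactly $2$, so it is enough to bound the expected number of theta and dumbbell subgraphs of $G$.

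The basic estimate is on $\mathbb{E}[N_\ell(u,v)]$, the expected number of paths of length $\ell$ in $G$ between two fixed distinct vertices $u,v$. Because $G$ is bipartite and $p=d/\sqrt{n_1n_2}$, a direct count yields $\mathbb{E}[N_\ell(u,v)] \leq d^\ell/\sqrt{n_1n_2}$ when $u,v$ lie in different parts, and $\mathbb{E}[N_\ell(u,v)]\leq d^\ell/\min(n_1,n_2)$ when they lie in the same part; the key point is that each ``double step'' $N_i\to N_{3-i}\to N_i$ along a path contributes a factor $(pn_1)(pn_2)=d^2<1$, so sums over $\ell$ of any fixed parity converge geometrically. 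Multiplying three such estimates (for the three paths of a theta graph), summing over the placement of the two branch vertices, and evaluating the resulting geometric series in $d^2$, yields that the expected number of theta subgraphs of $G$ is $O(1/n_1)$. An analogous computation for dumbbells, combined with the standard bound $\mathbb{E}[\text{cycles of length } 2k]=O(d^{2k}/k)$, gives the same $O(1/n_1)$ estimate. Since $n_1\to\infty$, both expectations are $o(1)$, and Markov's inequality then gives the desired planarity.

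The main obstacle is the book-keeping needed to respect the bipartite parity constraints: cycles in $G$ have even length, and paths between two given vertices have length of a fixed parity depending on which parts contain the endpoints, so the estimate splits into cases depending on the locations of the branch vertices of the theta or of the cycles of the dumbbell (for the theta: both branch vertices in $N_1$, both in $N_2$, or one in each part, with analogous subdivisions for the dumbbell). However, in every case the sums are dominated by convergent geometric series in $d^2<1$, so the $o(1)$ bounds go through uniformly in the relative sizes of $n_1$ and $n_2$.
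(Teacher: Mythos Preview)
Your proof is correct and uses the same overall strategy as the paper: reduce planarity to the absence of complex components, then run a first-moment argument on the minimal bicyclic witnesses (your theta graphs and dumbbells are exactly the three configurations in the paper's Figure~\ref{H}). The execution differs in one notable respect. The paper first invokes monotonicity to push $p$ up to $\frac{1}{\sqrt{n_1n_2}}\bigl(1-n_1^{-1/6}\bigr)$, parameterises each witness as a single path with two chord edges, and then needs the factor $(1-n_1^{-1/6})^{2k}$ to make the resulting sum $\sum_k k^2/n_1 \cdot e^{-2kn_1^{-1/6}}$ converge. You instead keep $d<1$ fixed and exploit directly that every double step along a path contributes $p^2n_1n_2=d^2<1$, so all sums over path and cycle lengths are genuine geometric series; this avoids the monotonicity step and the integral estimate entirely, at the cost of not immediately covering $d=d(n)\to 1$. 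For the theorem as stated (constant $d<1$) your route is cleaner; the paper's version implicitly proves the slightly stronger statement that planarity persists up to $d=1-n_1^{-1/6}$.
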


In the supercritical regime, where $p=\frac{d}{\sqrt{n_1n_2}}$ with $d>1$, there is different behaviour according to whether $n_1 = \Theta(n_2)$, in which case we say that the graph is \emph{balanced}, or $n_1 = o(n_2)$, in which case we say that the graph is \emph{unbalanced}.

Firstly in the balanced case we see that the genus is again whp linear in the number of edges, noting that whp the number of edges in $G(n_1,n_2,p)$ is $(1+o(1))pn_1n_2$ for $p \gg (n_1n_2)^{-1}$.
\begin{theorem} \label{main2}
	Let $G=G(n_1,n_2,p)$ where $1 \ll n_1=\lambda n_2$ for constant $\lambda\leq 1$. If $p=\frac{d}{\sqrt{n_1n_2}}$ for constant $d>1$, then whp
    $$g(G)=(1+o(1))\gamma(d,\lambda)pn_1n_2,$$ where 
		$$\gamma(d,\lambda)=\frac{1}{2}-\frac{\lambda+1}{2d\sqrt{\lambda}}+\frac{1}{2d^2}\sum_{k=1}^{\infty}\left(\frac{d}{\sqrt{\lambda}}e^{-\frac{d}{\sqrt{\lambda}}}\right)^k\sum_{\substack{r+s=k,\\ 0\leq r,s\leq k}}\frac{r^{s-1}s^{r-1}}{r!s!}\lambda^re^{-\frac{d(\lambda-1)}{\sqrt{\lambda}}s}.$$
	\end{theorem}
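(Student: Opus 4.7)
The plan is to adapt the strategy of Dowden, Kang, and Krivelevich \cite{Kang} used to prove Theorem~\ref{(1)and(2)}(2) to the bipartite setting. The first step is a structural analysis of $G$: in the supercritical regime $p=d/\sqrt{n_1n_2}$ with $d>1$ and $n_1=\Theta(n_2)$, a bipartite branching-process argument (with ``effective degrees'' $pn_2=d/\sqrt{\lambda}$ and $pn_1=d\sqrt{\lambda}$ and supercriticality parameter $d^2>1$) should show that whp $G$ has a unique giant component $L$, and all other components are trees. First and second moment arguments, using Scoins' formula $r^{s-1}s^{r-1}$ for the number of bipartite spanning trees on $(r,s)$ vertices together with Poisson approximations of the form $(1-p)^{rn_2+sn_1}\sim e^{-rd/\sqrt{\lambda}-sd\sqrt{\lambda}}$, yield that the number of tree components of each type $(r,s)$ (with $r$ vertices in $N_1$ and $s$ in $N_2$) is concentrated around its expectation; summing over types with $r+s=k$ then reproduces exactly the inner sum appearing in the formula for $\gamma(d,\lambda)$, and determines whp the total number of tree components $c_T$, their total vertex count $v_T$, and their total edge count $e_T=v_T-c_T$.

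Next, since pendant removals and degree-$2$ suppressions preserve both the genus and the excess $e-v$, we have $g(G)=g(L)=g(\kappa)$, where $\kappa$ is the kernel of $L$, and $e(\kappa)-v(\kappa)=e(L)-v(L)=e(G)-v(G)+c_T$. The Euler formula then gives
\[
g(G) = \frac{e(G)-v(G)+c_T+2-f(\kappa)}{2}.
\]
The crux of the argument is to show that in a minimum-genus embedding of $\kappa$, the face count satisfies $f(\kappa)=o(pn_1n_2)$. The bipartite-girth bound $f\le e/2$ alone is much too weak for this. Instead, following \cite{Kang}, one analyses $\kappa$ as a random multigraph of minimum degree~$3$ on $\Theta(n_2)$ vertices with a computable bipartite degree sequence (via a configuration-model representation), and then uses a direct embedding construction to exhibit an embedding with only $o(n_2)$ faces.

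Combining these ingredients gives $g(G)=\tfrac12[e(G)-v(G)+c_T]+o(pn_1n_2)$ whp; dividing by $pn_1n_2=d\sqrt{n_1n_2}$ and substituting $e(G)\sim pn_1n_2$, $v(G)=(1+\lambda)n_2$, and the explicit expression for $c_T$ from the first step then yields the claimed formula for $\gamma(d,\lambda)$. The main obstacle is the face-count bound on $\kappa$: the bipartite-girth bound is inadequate on its own, and the bipartite setting complicates the configuration-model analysis of \cite{Kang}, particularly in tracking two distinct vertex types throughout the embedding construction and in maintaining the quantitative control required for the error term $o(pn_1n_2)$.
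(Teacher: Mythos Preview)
Your overall framework matches the paper's: apply Euler's formula and estimate each term, with the component count obtained via first- and second-moment calculations using Scoins' formula. The tree-counting step you sketch is essentially what the paper does (Lemma~\ref{lemma3.4}), though the paper does not need all non-giant components to be trees --- it allows $o(\log^5 n_1)$ small unicyclic components (Lemma~\ref{lemma3.3}) and shows separately that there are no small complex components (Lemma~\ref{lemma8.2}), which is enough.

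The substantive divergence is in the face bound, and here you are working much harder than necessary. The paper does \emph{not} pass to the kernel or invoke any configuration-model analysis. Instead it bounds $f(G)$ directly on $G$ by splitting faces according to a length threshold $2j$ with $j=\tfrac14\log_d n_1$: faces of length at most $2j$ are bounded by cycles, and a direct moment count gives at most $\sum_{k\le j}(p^2n_1n_2)^k=\sum_{k\le j}d^{2k}=O(\sqrt{n_1}\log n_1)$ such cycles; faces of length exceeding $2j$ number at most $2e(G)/(2(j{+}1))=O(n_1/\log n_1)$ by the handshake identity for face lengths. Both terms are $o(n_1)=o(pn_1n_2)$ (Lemma~\ref{l:cyclebound} and Corollary~\ref{lemma8.1}). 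So the step you flag as the ``main obstacle'' has an elementary resolution, and the bipartite configuration-model embedding you propose --- while plausibly workable along the lines of \cite{Kang} --- is not needed here. The simplification is available precisely because $p^2n_1n_2=d^2$ is a constant, so short cycles are genuinely rare; your instinct that the girth bound $f\le e/2$ alone is too weak is correct, but the fix is just to peel off the short faces first.
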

	
As with Theorem \ref{(1)and(2)}, we can also determine the value of the genus for slightly larger values of $p$, giving an intermediary result between Theorems \ref{theorem1.2} and \ref{main2}.
	\begin{theorem}\label{main4}
Let $G=G(n_1,n_2,p)$ where $1 \ll n_1=\lambda n_2$ for constant $\lambda\leq 1$. If $(n_1n_2)^{-\frac{1}{2}} \ll p \ll\left(n_1n_2\right)^{-\frac{1}{2}+o(1)}$, then whp
$$(1-o(1))\frac{1}{2}pn_1n_2 \leq g(G)\leq \frac{1}{2}pn_1n_2.$$
	\end{theorem}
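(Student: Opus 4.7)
The upper bound is immediate from Euler's formula: since any 2-cell embedding of a connected graph has at least one face, $g(H)\leq (e(H)-v(H)+1)/2$ for every connected $H$.  Summing over components of $G$ yields $g(G)\leq (e(G)-v(G)+c(G))/2\leq e(G)/2$, and Chernoff's inequality gives $e(G)\leq (1+o(1))pn_1n_2$ whp, which delivers the upper bound (up to a $(1+o(1))$ factor absorbed into the statement).

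For the lower bound, the plan is to combine Euler's formula with a girth argument.  Set $d:=p\sqrt{n_1n_2}$; the hypothesis on $p$ gives $d\to\infty$ and $d=(n_1n_2)^{o(1)}$, so by a standard diagonalisation we may choose an integer-valued function $\omega=\omega(n_1,n_2)\to\infty$ with $\omega d^{2\omega}=o(pn_1n_2)$.  A first-moment computation bounds the expected number of cycles of length $2k$ in $G$ by
\[
\frac{n_1!\,n_2!}{(n_1-k)!(n_2-k)!}\cdot\frac{p^{2k}}{2k}\;\leq\;\frac{d^{2k}}{2k},
\]
so the expected number of edges lying in some cycle of length at most $2\omega$ is at most $O(\omega d^{2\omega})=o(pn_1n_2)$, and by Markov's inequality whp this quantity is $o(pn_1n_2)$.

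Let $G'$ be obtained from $G$ by deleting every edge that lies in some cycle of length at most $2\omega$.  Then $G'$ is bipartite of girth greater than $2\omega$, whp $e(G')=(1-o(1))pn_1n_2$, and $g(G)\geq g(G')$ since deleting edges cannot increase the genus.  Passing to the 2-core (which has the same genus and is still bipartite of girth $>2\omega$) and applying the standard block-decomposition fact that every face-boundary walk of a 2-cell embedding of a graph of minimum degree at least $2$ has length at least the girth, we deduce $F(G')\leq 2e(G')/(2\omega+2)\leq e(G')/\omega$.  Euler's formula then yields
\[
g(G')\;=\;1+\frac{e(G')-v(G')-F(G')}{2}\;\geq\;(1-o(1))\frac{e(G')}{2}-\frac{v(G')}{2}.
\]
Since $v(G')\leq n_1+n_2=O(\sqrt{n_1n_2})$ while $e(G')\gg\sqrt{n_1n_2}$, we have $v(G')=o(e(G'))$, giving $g(G)\geq g(G')\geq (1-o(1))pn_1n_2/2$ as required.

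The main subtlety is the simultaneous choice of $\omega$: it must diverge so that the girth-based face count $F\leq e/\omega$ is asymptotically negligible, yet be slow enough that $d^{2\omega}=o(pn_1n_2)$ so that few edges are discarded when passing to $G'$.  The hypothesis $d=(n_1n_2)^{o(1)}$ is precisely what makes such a choice possible; this is what separates Theorem~\ref{main4} from Theorem~\ref{main2}, where $d$ is constant and the number of short cycles is genuinely linear in $e(G)$.  A secondary technicality is justifying the face-length bound via a block decomposition of the 2-core, but this is a routine application of the additivity of genus across blocks.
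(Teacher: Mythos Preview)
Your approach is correct and rests on the same idea as the paper's: use Euler's formula for both bounds, with the lower bound driven by the fact that there are few short cycles (hence few short faces) while long faces are controlled by the edge double-count $\sum_F |F|=2e$. The paper's execution is more direct, however: instead of deleting the edges lying in short cycles to obtain a high-girth subgraph $G'$ and then invoking the 2-core and block decomposition, it bounds $f(G)$ in place. Faces of length at most $2j$ are bounded by (twice) the number of cycles of length at most $2j$, and faces of length greater than $2j$ number at most $2e(G)/(2j+2)$; taking $j=j(\epsilon)$ large gives $f(G)\le \epsilon\, pn_1n_2$ for every $\epsilon>0$ (their Corollary to the face-counting lemma), and Euler's formula finishes as you do.

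Bypassing the edge deletion buys a cleaner argument. Your sentence ``every face-boundary walk of a 2-cell embedding of a graph of minimum degree at least $2$ has length at least the girth'' is not literally true: the 2-core can still contain bridges (two cycles joined by a path, say), and a face boundary then traverses each bridge twice. So you genuinely need the block decomposition you allude to, and then some care when summing Euler's formula over blocks (cut vertices are shared). None of this is fatal, but the paper's route avoids it entirely. One further minor slip: after passing to the 2-core you should apply Euler's formula with the 2-core's edge and vertex counts rather than those of $G'$; this is harmless here because at most $v(G')=o(pn_1n_2)$ edges are removed in forming the 2-core, but it should be stated.
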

	
	Perhaps surprisingly, in the unbalanced case, the genus behaves very differently in the supercritical regime. Not only will the genus of $G(n_1,n_2,p)$ with $1 \ll n_1 \ll n_2$  whp be sublinear in the number of edges, amazingly it will in fact coincide with the genus of a supercritical random binomial graph $G\left(n_1,\frac{d^2}{n_1}\right)$. As we will shall see later, this is no coincidence.
\begin{theorem}\label{main3}
	Let $G=G(n_1,n_2,p)$ where $1 \ll n_1 \ll n_2$. If $p=\frac{d}{\sqrt{n_1n_2}}$ where $d>1$, then whp 
$$g(G)=(1+o(1))\mu\left(d^2\right)\frac{d^2n_1}{2},$$
where $\mu$ is the function in Theorem \ref{(1)and(2)}.	\end{theorem}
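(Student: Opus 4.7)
Since $n_1 \ll n_2$, each vertex in $N_2$ has expected degree $pn_1 = d\sqrt{n_1/n_2} = o(1)$, so almost all $N_2$-vertices are isolated or pendant, while those of degree $\geq 3$ are too few to matter. The plan is to delete the isolated and pendant $N_2$-vertices (a genus-preserving operation) and suppress the degree-$2$ $N_2$-vertices (also genus-preserving, being a homeomorphism of the underlying 1-complex), leaving a multigraph that essentially lives on $N_1$. This multigraph turns out to be close to $G(n_1, d^2/n_1)$, and Theorem~\ref{(1)and(2)}(\ref{part2}) applied with $d$ replaced by $d^2$ then yields the stated formula $(1+o(1))\mu(d^2)\frac{d^2 n_1}{2}$.

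\textbf{Reduction.} Let $N_2^{=k}$ denote the set of $N_2$-vertices of degree exactly $k$ in $G$. Since the $N_2$-vertices have independent neighbourhoods, $|N_2^{=k}|$ is a sum of independent Bernoulli variables, and a direct first-moment (plus Chebyshev) calculation gives, whp,
\[
|N_2^{=2}| = (1+o(1))\frac{d^2 n_1}{2}, \qquad |N_2^{\geq 3}| = O(n_1^{3/2}n_2^{-1/2}) = o(n_1),
\]
where the second estimate crucially uses $n_1 \ll n_2$. Let $H$ be the multigraph obtained from $G$ by deleting every vertex of $N_2^{=0}\cup N_2^{=1}$ and suppressing every vertex of $N_2^{=2}$; both operations preserve the genus, so $g(H)=g(G)$. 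Deleting from $H$ the at most $\sum_{k\geq 3}k|N_2^{=k}| = o(n_1)$ edges incident to $N_2^{\geq 3}$ changes the genus by at most $o(n_1)$, yielding a multigraph $H'$ on vertex set $N_1$ with $(1+o(1))\frac{d^2 n_1}{2}$ edges.

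\textbf{Identification and conclusion.} Conditional on $M := |N_2^{=2}|$, each of the $M$ suppressed $N_2$-vertices contributes one edge whose endpoints form an independent uniformly random pair in $\binom{N_1}{2}$, so $H'$ is distributed as a uniformly random multigraph on $n_1$ vertices with $M$ edges. A standard birthday-type estimate shows that only $O(1)$ pairs of $N_1$-vertices carry $\geq 2$ parallel edges whp; erasing these extra copies alters the genus by $O(1)$ and leaves a simple graph uniformly distributed on simple graphs with $M - O(1)$ edges. By the asymptotic equivalence between the $G(n_1,m)$ and $G(n_1,q)$ models in the sparse regime with $q = d^2/n_1$, combined with Theorem~\ref{(1)and(2)}(\ref{part2}) applied to $G(n_1, d^2/n_1)$, this simple graph has genus $(1+o(1))\mu(d^2)\frac{d^2 n_1}{2}$ whp, and reversing the chain of reductions transfers the same asymptotics to $g(G)$. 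The main obstacle is this last transfer: propagating the $(1+o(1))$ precision of Theorem~\ref{(1)and(2)}(\ref{part2}) through the correspondence between the Bernoulli and fixed-edge-count models without losing control of the multiplicative error. I expect this to be a routine but careful bookkeeping exercise, relying on the standard contiguity between $G(n_1,m)$ and $G(n_1,q)$ and on the fact that, for any fixed $d>1$, the leading term $\mu(d^2)\frac{d^2 n_1}{2} = \Theta(n_1)$ dominates all the accumulated $o(n_1)$ error.
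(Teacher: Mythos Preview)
Your overall strategy coincides with the paper's: both reduce $G$ to a random graph on $N_1$ by stripping the isolated and pendant $N_2$-vertices, suppressing the degree-$2$ ones, and absorbing the degree-$\ge 3$ contribution into an $o(n_1)$ genus error, then invoke Theorem~\ref{(1)and(2)}(\ref{part2}). The one substantive difference is the mechanism for comparing the reduced graph to $G(n_1,d^2/n_1)$. The paper works with the simple $2$-centre $H$ directly and uses Holley's inequality (Theorem~\ref{Theorem3.5}) to sandwich $H$ stochastically between $G(n_1,(1-\delta)p^2n_2)$ and $G(n_1,(1+\delta)p^2n_2)$ (Lemma~\ref{lemma5.3}), after which monotonicity of genus and continuity of $\mu$ finish immediately. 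You instead exploit the exact conditional law: given $M=|N_2^{=2}|$, the suppressed vertices contribute $M$ i.i.d.\ uniform pairs, and conditional on the number $m'$ of \emph{distinct} pairs the simplification is precisely $G(n_1,m')$, so the standard $G(n,m)$--$G(n,p)$ coupling plus continuity of $\mu$ closes the argument. Your route is more elementary in that it avoids the correlation inequality; the paper's is a little shorter once Holley is in hand and bypasses the multigraph bookkeeping. One small wording fix: the simplification is not literally ``uniformly distributed on simple graphs with $M-O(1)$ edges'' (the edge count is random), but your argument becomes exact once you condition on $m'$, and since whp $m'=(1+o(1))d^2n_1/2$ the sandwich you sketch goes through.
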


\subsection{Techniques and outline of the paper}
A useful tool for studying the genus of a graph is Euler's formula and, following previous papers on the subject (see \cite{Kang,Mohar}), we will also use this tool for the balanced case.  Given a graph $G$, Euler's formula states that
$$g(G)=\frac{1}{2}\left(e(G)-v(G)-f(G)+\kappa(G)+1\right),$$
where $e(G)$ is the number of edges of $G$, $v(G)$ is the number of vertices of $G$, $f(G)$ is the number of faces of $G$  when embedded on a surface of minimal genus (i.e., a sphere to which $g(G)$ handles have been attached), and $\kappa(G)$ is the number of components of $G$.

In the case where $n_1$ and $n_2$ are comparable in size and $p = \frac{d}{\sqrt{n_1n_2}}$,  writing $G = G(n_1,n_2,p)$, a standard argument tells us that whp $f(G) = o(\sqrt{n_1n_2})$, whereas we can give asymptotic expressions for $e(G)$ and $v(G)$ which are both $\Theta(\sqrt{n_1n_2})$. So, in order to estimate the genus it remains to estimate $\kappa(G)$. Using some results of Johansson \cite[Lemmas 7 and 8]{Johansson} about the emergence of the giant component in $G(n_1,n_2,p)$, we can show that the majority of components in $G(n_1,n_2,p)$ are tree components, and then use first and second moment calculations to estimate the number of such components.

In the case where $n_1 \ll n_2$ however, it becomes difficult to use Euler's formula. Indeed, say again with $p = \frac{d}{\sqrt{n_1n_2}}$, it is not too hard to see that
$$
\mathbb{E}(e(G)) = pn_1n_2 \quad \text{and} \quad v(G) = n_1 + n_2,$$ 
and furthermore, in this case whp the number of isolated vertices in $N_1$ is approximately
\begin{equation}\label{e:isolated}
n_2(1-p)^{n_1} = n_2 - pn_1n_2 + O(n_1),
\end{equation}
and so
$$ \mathbb{E}(\kappa(G)) = \mathbb{E}\left(\kappa_1(G) + \kappa_{\geq 2}(G)\right) = n_2 - pn_1n_2 + O(n_1),
$$
where $\kappa_1$ and $\kappa_{\geq 2}$ are the number of components of order one and at least two respectively in $G$ (note that, since every component of order at least two meets the smaller partition class, the number of components will be asymptotically determined by the number of isolated vertices in the larger partition class), and so the leading order terms all cancel. Moreover, our bound on $f(G)$ is then perhaps large compared to the genus.

In order to get around this we consider an auxilliary graph $H$ on $N_1$, which we call the \emph{$2$-centre} of $G$ (see Section \ref{unbalanced}), formed by joining two vertices $x,y $ with an edge if they are joined by a path $xzy$ of length two in $G$ such that $d(z) =2$. This is similar to a graph considered by Johansson \cite[Section 3.1]{Johansson}, which he called the \emph{even projection}, however we have an extra condition on vertices $z \in N_2$ lying in the path of length two.

By considering the structure of $H$, firstly, we can get a better bound on $f(G)$, showing that $f(G) = o(n_1)$, which in fact holds regardless of the relationship between $n_1$ and $n_2$. However, more importantly we can show that $H$ has a distribution which is very close to that of a binomial random graph. We hope this technique will be useful for answering other questions about the graph $G(n_1,n_2,p)$ when $n_1 \ll n_2$.

	\begin{lemma}\label{lemma5.3} 
	Let $G=G(n_1,n_2,p)$ where $1 \ll n_1 \ll n_2$ and $\left(n_1n_2\right)^{-\frac{1}{2}} \leq p \ll \min \left\{ n_1^{-1}, n_2^{-\frac{1}{2}}\right\}$. For any $\delta > 0$, if $G_1= G(n_1,q_1)$ and $G_2= G(n_1,q_2)$ are binomial random graphs with $q_1=(1-\delta)p^2n_2$ and $ q_2=(1+\delta)p^2n_2$, then 
	$$G_1 \preceq H \preceq G_2,$$
	where $\preceq$ denotes stochastic domination (see Definition \ref{def2.6}).
	\end{lemma}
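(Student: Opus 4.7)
The plan is to exploit the fact that $H$ decomposes as a union of independent single-edge contributions indexed by $N_2$. For each $z\in N_2$ set $X_z:=N(z)$ if $|N(z)|=2$ and $X_z:=\emptyset$ otherwise; then the $X_z$ are independent with $\Pr[X_z=e]=\alpha:=p^{2}(1-p)^{n_1-2}$ for every $e\in\binom{N_1}{2}$ and $\Pr[X_z=\emptyset]=1-\binom{n_1}{2}\alpha$, and $E(H)=\{X_z:z\in N_2,\,X_z\neq\emptyset\}$. The first step is to use $p\ll n_1^{-1}$ (so $(1-p)^{n_1-2}=1-o(1)$) together with $p\ll n_2^{-1/2}$ (so $n_2\alpha=o(1)$) to show that the edge marginal $\pi:=1-(1-\alpha)^{n_2}$ satisfies $\pi=(1+o(1))p^{2}n_2$, and in particular $q_1<\pi<q_2$ for $n$ large. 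This strict slack is what allows the couplings.

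For the upper bound $H\preceq G_2$ I would introduce independent $Y_{e,z}\sim\mathrm{Bernoulli}(q^*)$ with $q^*:=1-(1-q_2)^{1/n_2}$, so that $\{e:\exists z,\,Y_{e,z}=1\}$ is distributed as $G(n_1,q_2)$. For each $z$ independently, write $S_z:=\{e:Y_{e,z}=1\}$ and sample $N(z)$ conditional on $S_z$ as follows: with probability $r:=\beta\big/\bigl(1-(1-q^*)^{\binom{n_1}{2}}\bigr)$ (which lies in $[0,1]$ because $q^*>\alpha$ for $n$ large), and provided $S_z\neq\emptyset$, let $N(z)$ be an edge drawn uniformly from $S_z$ (so $|N(z)|=2$); otherwise sample $N(z)$ from the iid $\mathrm{Bernoulli}(p)$ distribution on $N_1$ conditioned on $|N(z)|\neq 2$. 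A direct calculation using the identity $\bigl(1-(1-q^*)^{\binom{n_1}{2}}\bigr)r=\beta$ verifies that the marginal of $N(z)$ is the intended iid $\mathrm{Bernoulli}(p)$ distribution, and by construction $|N(z)|=2$ forces $N(z)\in S_z\subseteq E(G_2)$, yielding $E(H)\subseteq E(G_2)$ almost surely.

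For the lower bound $G_1\preceq H$ the symmetric set-up uses $q^{**}:=1-(1-q_1)^{1/n_2}$ (satisfying $q^{**}<\alpha$ for $n$ large) together with independent $Y_{e,z}\sim\mathrm{Bernoulli}(q^{**})$, so that $\{e:\exists z,\,Y_{e,z}=1\}$ is distributed as $G(n_1,q_1)$; the target is a coupling in which whenever $Y_{e,z}=1$ some $z'\in N_2$ has $N(z')=e$, and hence $e\in E(H)$. This works verbatim when every $z$ satisfies $|S_z|\le 1$, since each $z$ with $|S_z|=1$ can simply be forced to have $N(z)$ equal to its unique marked edge. The main obstacle is a global matching issue: on the event that some $z$ has $|S_z|\ge 2$ a single $N(z)$ can realise at most one edge of $S_z$, so the surplus marks must be absorbed by other $z'$ with $S_{z'}=\emptyset$, and the reassignment must be a function of the $Y$'s alone while preserving the iid $\mathrm{Bernoulli}(p)$ marginal of each $N(z')$. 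Designing this reassignment---effectively a random injection from the marked edges into spare vertices of $N_2$, coupled with a compensating adjustment to the distribution of $N(z)$ on unmatched $z$---is the technically delicate heart of the proof.
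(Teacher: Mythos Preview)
Your approach is genuinely different from the paper's. You attempt to build explicit monotone couplings between $H$ and the binomial random graphs $G_1,G_2$, exploiting the decomposition $E(H)=\{X_z:z\in N_2,\,X_z\neq\emptyset\}$ with the $X_z$ independent. The paper instead applies Holley's theorem (Theorem~\ref{Theorem3.5}): it fixes an edge $e=\{x,y\}$, introduces the auxiliary set $X=\bigcup_{v\in N_1\setminus\{x,y\}}N(v)\subseteq N_2$, observes that conditionally on $X=M$ the event $\{e\in H\}$ depends only on edges from $\{x,y\}$ to $N_2\setminus M$ while $E(H)-e$ depends only on edges into $M$, so the two are conditionally independent, and then computes $\Pr[e\in H\mid X=M]=1-(1-p^2)^{n_2-|M|}$. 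A Chernoff bound gives $|M|=o(n_2)$ with probability $1-O(e^{-n_1})$, whence $\Pr[e\in H\mid E(H)-e=K]=(1\pm o(1))p^2n_2\in(q_1,q_2)$ uniformly in $K$, and Holley's criterion delivers both dominations at once.

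Your upper-bound coupling is essentially sound, modulo two minor points: you use an undefined symbol $\beta$, which from the identity $(1-(1-q^*)^{\binom{n_1}{2}})r=\beta$ must be $\Pr[|N(z)|=2]=\binom{n_1}{2}\alpha$; and the claim $r\le1$ needs not only $q^*>\alpha$ but also $\binom{n_1}{2}q^*=o(1)$, which does follow from $p\ll n_1^{-1}$.

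The lower bound, however, is not a proof but an acknowledgement of the obstacle. You correctly identify that once some $z$ has $|S_z|\ge2$ you must offload surplus marked edges onto spare vertices $z'$ with $S_{z'}=\emptyset$, while keeping the reassignment a function of the $Y$'s alone and preserving the i.i.d.\ Bernoulli$(p)$ law of each $N(z')$. You do not carry this out, and it is genuinely awkward: because the events $\{X_z=e\}$ and $\{X_z=e'\}$ are mutually exclusive for fixed $z$, the edge-indicators of $H$ are \emph{negatively} correlated, so no positive-association shortcut is available; and forcing $N(z')$ to equal a specific edge determined by the $Y$'s of \emph{other} vertices destroys the independence of $N(z')$ from those $Y$'s, so ``a compensating adjustment'' is not obviously well-defined. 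The paper's Holley route sidesteps this entirely, since it never constructs a coupling but only verifies a uniform two-sided bound on conditional edge marginals, which handles both directions symmetrically and requires no matching argument.
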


Furthermore, we can show that whp the genus of $G(n_1,n_2,p)$ is close to the genus of $H$ (see Lemmas \ref{lemma5.1} and \ref{lemma5.2}), and hence we can determine the likely genus of $G(n_1,n_2,p)$ using Theorem \ref{(1)and(2)}.

The rest of the paper is structured as follows. Firstly, in Section \ref{Pre} we provide some notation, definitions, and key facts which we use in the paper. Then, in Section \ref{sub} we deal with the subcritical regime, proving Theorem \ref{main1}. In Section \ref{super} we consider the supercritical regime, firstly in the balanced case in Section \ref{balanced}, and then the unbalanced case in Section \ref{unbalanced}. Finally in Section \ref{Dis} we discuss our results and give some open problems.

\section{Preliminaries}\label{Pre}

\subsection*{Notation}
Given a graph $G=(V,E)$ and a subset $A\subset V$, we will write $N(A)$ for the \emph{neighbourhood} of $A$, i.e., the set of vertices $w\in V$ such that there is some $v \in A$ with $ \{v,w\} \in E$. Note that, for a random bipartite graph, if $A \subseteq N_i$ then $N(A) \subseteq N_{3-i}$ for any $i =1,2$. A \emph{path} of length $n$, or an \emph{$n$-path}, is a graph $P$ on vertices $v_0,v_1,\dots,v_n$, for which $\{v_i,v_j\}\in E(P)$ if and only if $i-j=\pm 1$. We will write $P=v_0v_1 \ldots v_n$. We will write $x = (1 \pm \epsilon)y$ to mean $x \in [(1 - \epsilon)y,(1 + \epsilon)y]$.

Below we will state some useful results that we will need for our proofs. The first is a result of Scoins \cite{S62} which gives the number of spanning trees in a complete bipartite graph.

\begin{lemma}\label{bipartitesubgraphs}
For any $a,b\in \mathbb N$, the number of spanning trees of $K_{a,b}$ is $a^{b-1}b^{a-1}$.
\end{lemma}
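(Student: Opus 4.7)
The plan is to apply the Matrix--Tree Theorem (Kirchhoff's theorem), which states that for a graph $G$ on $n$ vertices with Laplacian eigenvalues $0 = \lambda_0 \leq \lambda_1 \leq \cdots \leq \lambda_{n-1}$, the number of spanning trees equals $\frac{1}{n}\prod_{i=1}^{n-1}\lambda_i$. The task therefore reduces to diagonalising the Laplacian of $K_{a,b}$.

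Labelling the partition classes as $U$ and $W$ of sizes $a$ and $b$ respectively, the Laplacian has the block form
$$L(K_{a,b}) = \begin{pmatrix} bI_a & -J_{a,b} \\ -J_{b,a} & aI_b \end{pmatrix},$$
where $I_k$ is the $k \times k$ identity and $J_{k,\ell}$ is the $k \times \ell$ all-ones matrix. The all-ones vector on $U \cup W$ gives the trivial eigenvalue $0$. Any vector supported on $U$ whose coordinates sum to zero is annihilated by the $-J_{b,a}$ block and is an eigenvector of $bI_a$, yielding eigenvalue $b$ with multiplicity $a-1$; symmetrically one obtains eigenvalue $a$ with multiplicity $b-1$. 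The remaining eigenvector, orthogonal to all of the above, is a scalar multiple of $(b,\ldots,b,-a,\ldots,-a)^{\top}$, and a short direct computation (using $J_{a,b}\mathbf{1}_b = b\mathbf{1}_a$ and $J_{b,a}\mathbf{1}_a = a\mathbf{1}_b$) shows that its eigenvalue is $a+b$.

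Substituting into Kirchhoff's formula, the number of spanning trees of $K_{a,b}$ is
$$\frac{1}{a+b}\cdot b^{a-1}\cdot a^{b-1}\cdot (a+b) \;=\; a^{b-1}b^{a-1},$$
as claimed. The only genuine computational step is the eigenvalue verification, which is routine linear algebra, so I do not anticipate any serious obstacle. An alternative, combinatorial route would be to set up a bipartite analogue of the Prüfer correspondence, encoding each spanning tree of $K_{a,b}$ by iteratively pruning the lowest-indexed leaf and recording its neighbour in the opposite class, thereby producing a bijection onto pairs of sequences of lengths $a-1$ and $b-1$ drawn from $W$ and $U$ respectively, whose total count is again $a^{b-1}b^{a-1}$.
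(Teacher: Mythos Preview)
Your proof via the Matrix--Tree Theorem is correct: the block description of $L(K_{a,b})$ is right, the eigenspace decomposition is complete (dimensions $1 + (a-1) + (b-1) + 1 = a+b$), and the product $\tfrac{1}{a+b}\,b^{a-1}a^{b-1}(a+b)$ gives the claimed count.

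The paper, however, does not prove this lemma at all. It is stated as a known result of Scoins and simply cited, so there is no ``paper's own proof'' to compare against. Your spectral argument is one of the standard modern proofs and is entirely self-contained; the original paper of Scoins uses a recursive/combinatorial argument. What your approach buys is brevity and no need for an external reference, at the cost of invoking Kirchhoff's theorem as a black box. The Pr\"ufer-style alternative you sketch at the end is also in the right spirit, though the bookkeeping for which sequence has which length and over which alphabet needs more care than your one-sentence summary suggests (the naive ``prune the smallest leaf'' rule does not directly split into two independent sequences of the stated lengths without further argument); since your main proof is the spectral one, this does not affect correctness.
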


We will also need to use the following result of Johansson (see \cite[Lemmas 7 and 8]{Johansson}) which concerns the component structure of $G(n_1,n_2,p)$ in the supercritical regime. In this paper, unless the base is explicitly mentioned, we will use $\log$ to denote the natural logarithm.
	\begin{theorem}\label{theorem3.2:Johansson}
	Let $G=G(n_1,n_2,p)$ with partition classes $N_1$ and $N_2$ of sizes $n_1$ and $n_2$, where $1\ll n_1\leq n_2$. If $p=\frac{d}{\sqrt{n_1n_2}}$ where $d >1$, then  there exist positive constants $\beta_0$,$\beta_1$ such that  whp in $G$
		\begin{itemize}
			\item at most one component  meets $N_1$ in more than $\beta_1\sqrt{n_1\log n_1}$  vertices;
			\item   no component meets $N_1$ in $k$ vertices with $k \in\left[\beta_0\log^2n_1, \beta_1\sqrt{n_1\log n_1}\right]$.
		\end{itemize}
	\end{theorem}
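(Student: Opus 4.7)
The plan is to model the local exploration of components in $G(n_1, n_2, p)$ by a two-type Galton-Watson branching process, in which a vertex in $N_i$ has offspring distributed as $\mathrm{Bin}(n_{3-i}, p)$ in $N_{3-i}$. With $p = d/\sqrt{n_1 n_2}$ the mean offspring matrix has spectral radius $d$, so the process is supercritical when $d > 1$; it couples with the component exploration from any starting vertex as long as only $o(\min\{n_1, n_2\})$ vertices have been exposed.

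For the second bullet (the gap), I would run a first moment argument. Let $X_{k,\ell}$ be the number of components meeting $N_1$ and $N_2$ in exactly $k$ and $\ell$ vertices, respectively. Any such component contains some spanning tree of the induced $K_{k,\ell}$, of which there are $k^{\ell-1}\ell^{k-1}$ by Lemma~\ref{bipartitesubgraphs}, and has no edges to its complement, giving
\[
\mathbb{E}[X_{k,\ell}] \leq \binom{n_1}{k}\binom{n_2}{\ell}\, k^{\ell-1}\ell^{k-1}\, p^{k+\ell-1}\, (1-p)^{k(n_2-\ell)+\ell(n_1-k)}.
\]
After applying Stirling's approximation and $(1-p)^n \leq e^{-pn}$, and substituting $p = d/\sqrt{n_1 n_2}$, the factor depending on $\ell$ is maximised near $\ell^\star \approx kd\sqrt{n_2/n_1}$, matching the branching heuristic for the expected size of $N_2 \cap C$. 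Substituting this saddle point and simplifying yields $\sum_\ell \mathbb{E}[X_{k,\ell}] \leq \mathrm{poly}(k)\cdot \psi(d)^k$ for some $\psi(d) < 1$ (analogous to the function $de^{1-d}$ that controls component sizes in $G(n, d/n)$). Choosing $\beta_0$ large enough so that this sum is $o(1)$ over $k \in [\beta_0 \log^2 n_1, \beta_1 \sqrt{n_1 \log n_1}]$ and applying Markov's inequality rules out intermediate-sized components.

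For the first bullet (uniqueness), I would extend the first moment calculation to disjoint pairs of components. Writing $Y$ for the number of unordered pairs of disjoint components each meeting $N_1$ in more than $\beta_1 \sqrt{n_1 \log n_1}$ vertices, the analogous expectation bound carries an extra factor of $(1-p)^{k_1 \ell_2 + k_2 \ell_1}$, forbidding edges between the two components. With $\ell_i$ near the branching-heuristic value $k_i d\sqrt{n_2/n_1}$ and $k_i \geq \beta_1 \sqrt{n_1 \log n_1}$, this factor is at most $\exp(-\Omega(\beta_1^2 d^2 \log n_1)) = n_1^{-\Omega(\beta_1^2)}$, which beats the trivial $O(n_1^2)$ bound on the number of pairs once $\beta_1$ is chosen large enough. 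Hence $\mathbb{E}[Y] = o(1)$ and uniqueness follows.

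The main obstacle is the first moment optimisation for the gap: the sum over $\ell$ for fixed $k$ must be handled carefully, since a naive uniform bound loses too much, especially in the unbalanced regime $n_1 \ll n_2$ where the optimum $\ell^\star$ is asymptotically much larger than $k$. Identifying the correct saddle point in $\ell$ and controlling the Stirling and combinatorial error terms around it is what ultimately produces the subcritical exponent $\psi(d)^k$, which is essential for summability over the stated range of $k$.
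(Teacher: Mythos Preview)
The paper does not prove this theorem: it is quoted from Johansson (Lemmas 7 and 8 of \cite{Johansson}) and used as a black box throughout Section~\ref{balanced}. There is therefore no proof in the paper to compare your proposal against.

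Your outline is a standard and plausible route to such a statement --- a first moment over spanning trees of putative components for the gap, and a first moment over disjoint pairs of large components for uniqueness --- and the saddle-point heuristic in $\ell$ is the right mechanism for producing the $\psi(d)^k$ decay. One point that would need more care in a full proof is the uniqueness step: you argue only at the saddle $\ell_i \approx k_i d\sqrt{n_2/n_1}$, but the first moment sums over all $(k_1,\ell_1,k_2,\ell_2)$, and pairs with atypically small $\ell_i$ carry a much weaker edge-exclusion factor $(1-p)^{k_1\ell_2+k_2\ell_1}$. Such configurations must be shown to contribute negligibly on their own merits (their spanning-tree count and isolation probability are also atypical), or the uniqueness argument can be replaced by a sprinkling step, which is more robust and avoids this summation entirely.
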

 The next result, which is a consequence of a theorem of Holley (see \cite[Section 1.3.2]{Johansson}), concerns stochastic domination of random variables. However, we will only state a specific case for binomial random graphs, as it is all we need. Firstly, we give a definition of stochastic domination for graph-valued random variables.

\begin{definition}\label{def2.6}
Let $G_1$ and $G_2$ be random variables which are distributed on the set of subgraphs of $K_n$. We say $G_1$ \emph{stochastically dominates} $G_2$, denoted by $G_2\preceq G_1$, if for every fixed graph $H$ 
$$
\mathbb{P}(G_2 \supseteq H) \leq \mathbb{P}(G_1 \supseteq H).
$$
\end{definition}
\begin{theorem}[Holley's Theorem]\label{Theorem3.5}
Let $G_1$ and $G_2$ be random variables which are distributed on the set of subgraphs of $K_n$. If 
    \begin{align*}\label{(21)}
        \mathbb{P}(e\in E(G_2)|E(G_2) -e = H) \leq \mathbb{P}(e\in E(G_1)|E(G_1)-e = H)
    \end{align*} 
   for all $e \in E(K_n)$ and for every subgraph $H$ of $K_n -e$, then $G_2\preceq G_1$.
\end{theorem}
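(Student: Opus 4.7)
The plan is to apply Holley's Theorem (Theorem \ref{Theorem3.5}): it suffices to show that for every pair $e = \{x, y\}$ with $x, y \in N_1$ and every subgraph $H_0$ of $K_{n_1} - e$,
\[
q_1 \;\le\; \mathbb{P}\bigl(e \in E(H) \,\big|\, E(H) - e = E(H_0)\bigr) \;\le\; q_2.
\]

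To compute this conditional probability, for each $z \in N_2$ set $T(z) := N_G(z) \cap N_1$. The $T(z)$ are i.i.d., each element of $N_1$ included independently with probability $p$, so $\mathbb{P}(T(z) = f) = q := p^2(1-p)^{n_1-2}$ for any 2-subset $f$. By definition of the 2-centre, $\{u, v\} \in E(H)$ iff some $z$ has $T(z) = \{u, v\}$. Writing $\mathcal{A} = E(H_0) \cup \{e\}$, $\mathcal{B} = \binom{N_1}{2} \setminus \mathcal{A}$, $b = |\mathcal{B}|$, and $K = |E(H_0)|$, the event $\{E(H) - e = H_0\}$ requires that no $z$ has $T(z) \in \mathcal{B}$ and every $f \in E(H_0)$ occurs as some $T(z)$; adding $\{e \in E(H)\}$ further requires $T(z) = e$ for some $z$. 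Using the independence of the $T(z)$'s and inclusion-exclusion, the conditional probability simplifies, after the common factor $(1-bq)^{n_2}$ cancels, to
\[
R \;=\; \frac{\sum_{k=0}^{K+1}(-1)^k\binom{K+1}{k}(1 - k\eta)^{n_2}}{\sum_{k=0}^{K}(-1)^k\binom{K}{k}(1 - k\eta)^{n_2}}, \qquad \eta := \frac{q}{1-bq}.
\]
In the ball-and-bucket picture---$n_2$ balls tossed i.i.d.\ into the buckets of $\mathcal{A}$ (each with probability $\eta$) plus a null bucket---$R$ is the probability that bucket $e$ is hit given the buckets of $E(H_0)$ are all hit. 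Poissonising turns the bucket counts into independent $\mathrm{Poisson}(n_2\eta)$ variables, so the conditioning factors out and gives $R = 1 - e^{-n_2 \eta}$.

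The hypotheses $(n_1 n_2)^{-1/2} \le p \ll \min\{n_1^{-1}, n_2^{-1/2}\}$ supply three small parameters. First, $n_1 p = o(1)$ gives $q = p^2(1 + o(1))$. Second, $n_1^2 p^2 = o(1)$ gives $bq \le \binom{n_1}{2} q = o(1)$, hence $\eta = p^2(1 + o(1))$. Third, $n_2 p^2 = o(1)$ gives $n_2 \eta = o(1)$, whence $1 - e^{-n_2\eta} = n_2 p^2 (1 + o(1))$. For $n_1, n_2$ large enough this places $R$ inside $[(1 - \delta)n_2 p^2,\,(1 + \delta)n_2 p^2] = [q_1, q_2]$, and Holley's theorem then delivers both dominations. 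The main obstacle is making the Poissonised asymptotic for $R$ uniform in $H_0$: both alternating sums can have up to $\Theta(n_1^2)$ terms, so crude term-wise bounds (where the error per term can far exceed the leading term when $K$ is large) do not suffice. The standard remedy is the Stirling-number expansion
\[
\sum_{k=0}^a (-1)^k \binom{a}{k}(1 - k\eta)^{n_2} \;=\; a!\sum_{j \ge a}(-1)^{j-a}\binom{n_2}{j}\eta^j S(j, a),
\]
whose leading $j = a$ term is $(n_2)_a \eta^a$ and whose successive corrections are suppressed by factors of $n_2\eta\cdot a$ and $a/n_2$; the small parameters $n_1 p$ and $n_2 p^2$ then control these corrections, so that the ratio of leading Stirling terms, $(n_2 - K)\eta = n_2 p^2 (1 + o(1))$, dominates uniformly in $K \le \binom{n_1}{2}$ and $R$ lands in the required window.
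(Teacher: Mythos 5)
Your proposal does not address the statement you were asked to prove. The statement is Holley's Theorem itself (Theorem~\ref{Theorem3.5}): a general sufficient condition, phrased in terms of one-edge conditional probabilities, for one random subgraph of $K_n$ to stochastically dominate another. You open with ``The plan is to apply Holley's Theorem,'' take the theorem as a black box, and then carry out the sandwiching argument $G_1 \preceq H \preceq G_2$ for the $2$-centre $H$ of $G(n_1,n_2,p)$. In other words, you have written a (more explicit, and in places over-engineered) proof of Lemma~\ref{lemma5.3}, the \emph{application} of Holley's Theorem, rather than a proof of the theorem. Invoking a result is not proving it, so as a response to the prompt the gap is total: nothing in your argument establishes that the hypothesized inequality on conditional edge probabilities implies $\mathbb{P}(G_2 \supseteq H) \le \mathbb{P}(G_1 \supseteq H)$ for every fixed $H$.

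For context, the paper itself does not prove Theorem~\ref{Theorem3.5} either; it states the result and cites Johansson. A genuine proof would have to produce, from the one-edge hypothesis, a monotone coupling: a joint distribution of $(\omega_1,\omega_2)$ on pairs of subgraphs of $K_n$ with $\omega_2 \subseteq \omega_1$ almost surely and with the marginals of $G_1$ and $G_2$. The standard route is a Glauber-type Markov chain on ordered pairs $(\omega_1,\omega_2)$ with $\omega_2 \subseteq \omega_1$, which at each step picks an edge $e$ uniformly, draws a single uniform $U\in[0,1]$, and resets $e$ to be present in $\omega_i$ iff $U \le \mathbb{P}(e \in E(G_i)\mid E(G_i)-e=\omega_i - e)$; the hypothesized inequality (together with $\omega_2-e \subseteq \omega_1-e$) guarantees the order $\omega_2 \subseteq \omega_1$ is preserved, the marginal chains are reversible with the laws of $G_1$ and $G_2$ as stationary distributions, and the stationary law of the joint chain is the desired coupling, from which $G_2\preceq G_1$ follows immediately. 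If your intention is to prove the theorem rather than apply it, that is the argument that needs to appear.
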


In terms of probabilistic estimates, we will assume the reader is familiar with the use of Markov's and Chebyshev's inequalities, and we will use the following version of the Chernoff bounds, see for example \cite{Alon}.

\begin{theorem}[Chernoff bounds]\label{theorem2.8}
Let $X \sim Bin(n,p), \mu = \mathbb{E}(X) = np$, and let $t \geq 0$. Then,
\begin{itemize}
    \item $\mathbb{P}(X\geq \mathbb{E}(X)+t)\leq \exp \left(-\frac{t^2}{2\left(\mu+\frac{t}{3}\right)}\right)$;
    \item $\mathbb{P}(X\leq \mathbb{E}(X)- t)\leq \exp \left(-\frac{t^2}{2\mu}\right)$. 
\end{itemize}
\end{theorem}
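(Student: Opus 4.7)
The plan is to establish both tail bounds via the standard exponential moment (Chernoff) method. I would first write $X = \sum_{i=1}^{n} X_i$ as a sum of independent Bernoulli$(p)$ random variables, and for any $\lambda > 0$ apply Markov's inequality to $e^{\lambda X}$ to get
$$\mathbb{P}(X \geq \mu + t) \leq e^{-\lambda(\mu + t)} \mathbb{E}(e^{\lambda X}) = e^{-\lambda(\mu + t)} (1 - p + pe^{\lambda})^{n}.$$
Using the elementary inequality $1 + x \leq e^{x}$ gives $(1 - p + pe^{\lambda})^{n} \leq \exp(\mu(e^{\lambda} - 1))$, so the right-hand side is bounded by $\exp\bigl(\mu(e^{\lambda} - 1) - \lambda(\mu + t)\bigr)$.

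The next step is to optimise the exponent in $\lambda$. Setting the derivative to zero yields $\lambda = \log(1 + t/\mu)$, which produces the Bennett-type inequality
$$\mathbb{P}(X \geq \mu + t) \leq \exp\bigl(-\mu\, h(t/\mu)\bigr), \qquad h(x) = (1+x)\log(1+x) - x.$$
To match the form stated in the theorem, I would then invoke the one-variable inequality $h(x) \geq x^{2}/(2(1 + x/3))$, valid for all $x \geq 0$; substituting $x = t/\mu$ gives $\mu\, h(t/\mu) \geq t^{2}/(2(\mu + t/3))$, which yields the first claimed bound.

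For the lower tail I would run the same argument with $\lambda < 0$ (assuming $t \leq \mu$, since otherwise the bound is trivial), obtaining $\mathbb{P}(X \leq \mu - t) \leq \exp\bigl(-\mu\, h(-t/\mu)\bigr)$. Here the conversion is cleaner: I would only need the inequality $h(-y) \geq y^{2}/2$ for $y \in [0, 1]$, which follows by setting $f(y) = h(-y) - y^{2}/2$ and noting that $f(0) = f'(0) = 0$ while $f''(y) = y/(1 - y) \geq 0$, so $f \geq 0$. This gives the second bound $\mathbb{P}(X \leq \mu - t) \leq \exp(-t^{2}/(2\mu))$.

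The main obstacle is the elementary inequality $h(x) \geq x^{2}/(2(1 + x/3))$ for $x \geq 0$, which is what converts the clean Bennett form into the Bernstein-type form stated in the theorem. It is a routine calculus fact — one can, for example, verify that $\phi(x) = 2h(x)(1 + x/3) - x^{2}$ satisfies $\phi(0) = \phi'(0) = \phi''(0) = 0$ and $\phi'''(x) \geq 0$ on $[0, \infty)$ — but it is the only step of the argument that is not essentially mechanical; everything else is the standard Chernoff recipe of MGF computation, exponentiation, and optimisation.
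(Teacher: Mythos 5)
The paper does not prove this statement; Theorem~\ref{theorem2.8} is imported as a known result, with a citation to a standard reference (Alon and Spencer), so there is no internal proof to compare your argument against. Your derivation is the standard one: Markov applied to $e^{\lambda X}$, the bound $(1+p(e^{\lambda}-1))^{n}\leq e^{\mu(e^{\lambda}-1)}$, optimisation at $\lambda=\log(1+t/\mu)$ to reach the Bennett form $\exp(-\mu h(t/\mu))$, and then the elementary inequalities $h(x)\geq x^{2}/(2(1+x/3))$ and $h(-y)\geq y^{2}/2$ to recover the stated Bernstein-type upper and lower tails. I checked the calculus: for $\phi(x)=2h(x)(1+x/3)-x^{2}$ one indeed has $\phi(0)=\phi'(0)=\phi''(0)=0$ and $\phi'''(x)=\frac{4x/3}{(1+x)^{2}}\geq 0$, and for the lower tail $f(y)=h(-y)-y^{2}/2$ satisfies $f(0)=f'(0)=0$ and $f''(y)=\frac{y}{1-y}\geq 0$ on $[0,1)$, with the case $t\geq\mu$ being trivial as you note. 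The argument is complete and correct.
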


\section{Subcritical regime: proof of Theorem \ref{main1}}\label{sub}
In this section we will prove Theorem \ref{main1}, which says that whp a random bipartite graph is planar in the subcritical regime. In fact, we will show slightly more. We say that a component of a graph is \emph{complex} if it contains more than one cycle and \emph{unicyclic} if it contains a unique cycle. We will show that in this range of $p$ whp  $G(n_1,n_2,p)$ contains no complex components.
\begin{lemma}\label{Lemma4.1}
Let $G=G(n_1,n_2,p)$ where $1\ll n_1\leq n_2$. If $p=\frac{d}{\sqrt{n_1n_2}}$ for constant $d<1$, then whp there is no complex component in $G$.
\end{lemma}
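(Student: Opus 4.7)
The plan is to use the first moment method on the number of \emph{kernels} of $G$, where a kernel means a connected subgraph $K$ with $e(K) = v(K) + 1$ and minimum degree at least $2$. First I would carry out a reduction: every complex component $C$ of $G$ contains a kernel. Indeed, $C$ has cyclomatic number at least $2$, so taking a spanning tree of $C$ together with any two non-tree edges produces a connected subgraph with cyclomatic number exactly $2$; iteratively removing degree-$1$ vertices preserves both connectivity and cyclomatic number, and leaves a non-empty graph of minimum degree $\geq 2$, i.e.\ a kernel inside $C$. Thus it suffices to prove that whp $G$ contains no kernel.

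The second step is a structural classification of bipartite kernels. For a kernel $K$ with $a$ vertices in $N_1$ and $b$ in $N_2$, the identity $\sum_v \deg(v) = 2(a+b+1)$ together with the constraint $\deg \geq 2$ gives total excess degree exactly $2$, so $K$ has either a single vertex of degree $4$ or two vertices of degree $3$, with all other vertices of degree $2$. Suppressing the degree-$2$ vertices therefore shows that $K$ is a subdivision of one of three shapes: a \emph{figure-eight} (one branch vertex carrying two loops), a \emph{theta graph} (two branch vertices joined by three parallel edges), or a \emph{dumbbell} (two branch vertices each carrying a loop, connected by an edge). Crucially, restricting the degree sum to either side of the bipartition gives $a + b + 1 = e(K) \geq 2\max(a,b)$, forcing $|a - b| \leq 1$. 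This bipartite-specific saving is what keeps the expected count small even when $n_1 \ll n_2$.

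Finally, for each shape and each consistent assignment of branch vertices to sides and of subdivision lengths, the expected number of labeled embeddings in $G$ is at most
\[
\binom{n_1}{a}\binom{n_2}{b} \cdot N_{a,b} \cdot p^{a+b+1},
\]
where $N_{a,b}$ counts the labelings on fixed vertex sets. Substituting $p = d/\sqrt{n_1 n_2}$ and using $|a - b| \leq 1$ together with $n_1 \leq n_2$, this bound simplifies to $O\bigl(d^{a+b+1} N_{a,b}/(a!\,b!\,n_1)\bigr)$. The factor $N_{a,b}$ is essentially $(a-1)!(b-1)!\cdot \mathrm{poly}(a+b)$, coming from the polynomially-many subdivision-length compositions and from the orderings of internal vertices along the resulting paths, which cancels against the $a!\,b!$ in the denominator to leave a polynomial prefactor in $a+b$. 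Summing over $a$ and the shapes then yields a convergent geometric series in $d^2 < 1$, giving a total expected number of kernels that is $O(1/n_1) = o(1)$. The main obstacle will be the careful combinatorial bookkeeping of $N_{a,b}$, in particular tracking the parity constraints that the bipartition imposes on admissible subdivision lengths for each of the three shapes, but the slack from $d < 1$ combined with the key saving $|a-b| \leq 1$ is more than enough to absorb polynomial factors.
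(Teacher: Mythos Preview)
Your approach is correct and very close to the paper's: both argue by a first-moment bound on connected bicyclic subgraphs of minimum degree at least $2$, and both exploit the bipartite constraint $|a-b|\le 1$ (equivalently, the paper's observation that such a subgraph on $m+1$ vertices has $\lceil m/2\rceil$ vertices on each side) to extract the crucial $1/n_1$ saving. The paper parametrises all three of your shapes at once as ``a path $v_0v_1\ldots v_m$ together with two extra edges $\{v_0,v_i\}$ and $\{v_j,v_m\}$'', which streamlines the bookkeeping you flag as the main obstacle.

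The one genuine tactical difference is how the sum over sizes is controlled. You use the hypothesis $d<1$ directly, obtaining a convergent series $\sum_k \mathrm{poly}(k)\,d^{2k}=O(1)$ and hence an expected count of $O(1/n_1)$. The paper instead first uses monotonicity to replace $p$ by $\frac{1}{\sqrt{n_1n_2}}\bigl(1-n_1^{-1/6}\bigr)$, and then the factor $\bigl(1-n_1^{-1/6}\bigr)^{2k+1}\le\exp(-2kn_1^{-1/6})$ is summed via an integral to get $O(1/\sqrt{n_1})$. Your route is arguably more direct for the lemma as stated (fixed constant $d<1$); the paper's route buys a little more, namely uniformity up to $d=1-n_1^{-1/6}$, i.e.\ slightly into the critical window.
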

\begin{proof}
	Since containing a complex component is an increasing property, it is sufficient to prove the theorem for $p=\frac{1}{\sqrt{n_1n_2}}-\frac{n_2^{\frac{1}{6}}}{(n_1n_2)^{\frac{2}{3}}}$. In particular, we will show that, for this $p$, whp there is no complex component in $G$.
		
		If a component of $G$ contains at least two cycles, then in particular it has to contain a subgraph $H$ which consists of two cycles which are joined by a path, or which meet in a vertex, or which form a cycle with a diagonal path (see Figure \ref{H}). We note that each of these subgraphs can be constructed by taking a path $P=v_0v_1\ldots v_m$ and adding two edges of the form $\{v_0,v_i\}$ and $\{v_j,v_m\}$ with $0 < i,j < m$.
		Let $Y$ be the number of subgraphs of this type in $G$. Then the number of complex components is non-zero if and only if $Y$ is non-zero.

	In order to bound $\mathbb{E}(Y)$ we split into three cases, according to the length of the path $P$ and the partition class its initial vertex lies in. Since $G(n_1,n_2,p)$ is bipartite, the vertices of $P$ alternate between $N_1$ and $N_2$, and so, if the path has length $2k$ then it meets $k$ vertices in $N_1$ and $k$ vertices in $N_2$. For each choice of $2k$ vertices in this manner, there are $2(k!)^2$ possible paths on this vertex set in $K_{n_1,n_2}$, and for each path at most $k^2$ many different choices for the two extra edges to form $H$. Since $H$ has $2k+1$ edges, the probability that it is a subgraph of $G(n_1,n_2,p)$ is $p^{2k+1}$. Using a similar argument for the cases where the length of the path is odd, we see that
			\begin{align*}
		\mathbb{E}(Y)&\leq \sum_{k=3}^{n_1} \binom{n_1}{k}\binom{n_2}{k}2(k!)^2k^2p^{2k+1}+\sum_{k=2}^{n_1} \binom{n_1}{k}\binom{n_2}{k+1}(k+1)!k!k^2p^{2k+2}
		\\
		&\hspace{2cm}+\sum_{k=2}^{n_1} \binom{n_1}{k+1}\binom{n_2}{k}(k+1)!k!k^2p^{2k+2}\\
		&\leq 2\sum_{k=3}^{n_1}\frac{n_1^k}{k!}\frac{n_2^k}{k!}(k!)^2k^2 \frac{1}{(\sqrt{n_1n_2})^{2k+1}}\left(1-n_1^{-\frac{1}{6}}\right)^{2k+1}\\
		&\hspace{1cm}+\sum_{k=2}^{n_1}\frac{n_1^k}{k!}\frac{n_2^{k+1}}{(k+1)!}(k+1)!k!k^2 \frac{1}{(n_1n_2)^{k+1}}\left(1-n_1^{-\frac{1}{6}}\right)^{2k+2}\\
		&\hspace{1cm}+\sum_{k=2}^{n_1}\frac{n_1^{k+1}}{(k+1)!}\frac{n_2^{k}}{k!}(k+1)!k!k^2 \frac{1}{(n_1n_2)^{k+1}}\left(1-n_1^{-\frac{1}{6}}\right)^{2k+2}\\
		&\leq 4\sum_{k=3}^{n_1}\frac{k^2}{n_1}\exp{\left(-2kn_1^{-\frac{1}{6}}\right)}
		\\
		&\leq 4\int_{0}^{\infty}\frac{x^2}{n_1}\exp{\left(-2xn_1^{-\frac{1}{6}}\right)}dx\\
		&=\frac{1}{\sqrt{n_1}}\\
		&=o(1).
		\end{align*}
		By Markov's inequality, we can conclude that whp $Y=0$. Hence, whp there are no complex components in $G$. 
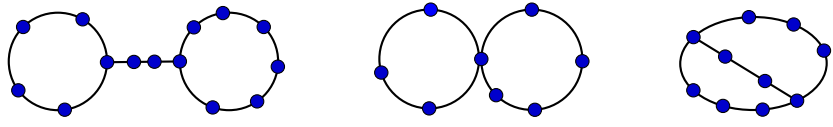
\begin{figure}[!ht]
	\center
\definecolor{ududff}{rgb}{0.30196078431372547,0.30196078431372547,1.}
\definecolor{qqqqff}{rgb}{0.,0.,1.}
\definecolor{qqqqcc}{rgb}{0.,0.,0.8}
\begin{tikzpicture}[line cap=round,line join=round,>=triangle 45,x=0.5cm,y=0.5cm]
\draw [line width=0.8pt] (3.,6.) circle (0.6462971452822611cm);
\draw [line width=0.8pt] (7.5,6.) circle (0.6463745044476924cm);
\draw [line width=0.8pt] (4.2924314850609395,5.9794852145228425)-- (6.207250991104615,6.);
\begin{scriptsize}
\draw [fill=qqqqcc] (3.18,4.72) circle (2.5pt);
\draw [fill=qqqqcc] (8.24,4.94) circle (2.5pt);
\draw [fill=qqqqcc] (4.2924314850609395,5.9794852145228425) circle (2.5pt);
\draw [fill=qqqqcc] (6.207250991104615,6.) circle (2.5pt);
\draw [fill=qqqqcc] (5.0001385562107545,5.987067369759658) circle (2.5pt);
\draw [fill=qqqqcc] (5.539862331166359,5.992849803782911) circle (2.5pt);
\draw [fill=qqqqcc] (3.6500373680768448,7.117251726382076) circle (2.5pt);
\draw [fill=qqqqcc] (2.085997811818812,6.914002188181188) circle (2.5pt);
\draw [fill=qqqqcc] (1.9588959049544783,5.233904533834427) circle (2.5pt);
\draw [fill=qqqqcc] (7.339654219791198,7.28276624167041) circle (2.5pt);
\draw [fill=qqqqcc] (6.576112661073385,6.904230161502644) circle (2.5pt);
\draw [fill=qqqqcc] (8.414111590562115,6.914111590562116) circle (2.5pt);
\draw [fill=qqqqcc] (7.071986687124215,4.780162058304016) circle (2.5pt);
\draw [fill=qqqqcc] (8.785317176159944,5.861581227182776) circle (2.5pt);
\end{scriptsize}
\end{tikzpicture}
\hspace{1cm}
\begin{tikzpicture}[line cap=round,line join=round,>=triangle 45,x=0.5cm,y=0.5cm]
\draw [line width=0.8pt] (3.8,3.) circle (0.6552098900352465cm);
\draw [line width=0.8pt] (6.5,2.98) circle (0.6646803743153548cm);
\begin{scriptsize}
\draw [fill=qqqqcc] (2.54,2.64) circle (2.5pt);
\draw [fill=qqqqcc] (5.56,2.04) circle (2.5pt);
\draw [fill=qqqqcc] (5.170774879666193,2.9989889302904826) circle (2.5pt);
\draw [fill=qqqqcc] (6.5,4.309360748630709) circle (2.5pt);
\draw [fill=qqqqcc] (3.8,1.689580219929507) circle (2.5pt);
\draw [fill=qqqqcc] (7.828750810195927,2.9397348239334566) circle (2.5pt);
\draw [fill=qqqqcc] (6.580419758784317,1.6530739800587704) circle (2.5pt);
\draw [fill=qqqqff] (3.8403015354809056,4.309799903129436) circle (2.5pt);
\end{scriptsize}
\end{tikzpicture}
\hspace{1cm}
\begin{tikzpicture}[line cap=round,line join=round,>=triangle 45,x=0.5cm,y=0.5cm]
\draw [rotate around={0.38712775415095124:(5.48,0.01)},line width=0.8pt] (5.48,0.01) ellipse (0.9631245871342223cm and 0.6164283983906508cm);
\draw [line width=0.8pt] (3.901734707673847,0.7104899078502777)-- (6.626780383375268,-0.9759965025522767);
\begin{scriptsize}
\draw [fill=qqqqcc] (5.36,1.24) circle (2.5pt);
\draw [fill=qqqqcc] (3.901734707673847,0.7104899078502777) circle (2.5pt);
\draw [fill=qqqqcc] (3.899253563914809,-0.7008156103799101) circle (2.5pt);
\draw [fill=qqqqcc] (5.721035336694717,-1.212221439903029) circle (2.5pt);
\draw [fill=qqqqcc] (6.626780383375268,-0.9759965025522767) circle (2.5pt);
\draw [fill=qqqqcc] (7.333670333527258,0.3525747204922167) circle (2.5pt);
\draw [fill=qqqqcc] (6.537054741592897,1.0448652733633943) circle (2.5pt);
\draw [fill=qqqqcc] (4.678541102525699,-1.114284421255369) circle (2.5pt);
\draw [fill=qqqqcc] (4.736286369246978,0.19399946933325007) circle (2.5pt);
\draw [fill=qqqqcc] (5.783607690864074,-0.45417064832080956) circle (2.5pt);
\end{scriptsize}
\end{tikzpicture}
	    \caption{Subgraphs witnessing complexity.}
	    \label{H}
	\end{figure}
\end{proof}
\begin{proof}[Proof of Theorem \ref{main1}]
    By Lemma \ref{Lemma4.1}, whp every component in $G$ is a tree or unicyclic, and therefore $G$ is planar. 
\end{proof}

\section{Supercritical regime}\label{super}
\subsection{The balanced case: proof of Theorems \ref{main2} and \ref{main4}}\label{balanced}
Recall that our plan is to use Euler's formula,
$$g(G)=\frac{1}{2}\left(e(G)-v(G)-f(G)+\kappa(G)+1\right),$$
to determine the genus of $G=G(n_1,n_2,p)$. Since $v(G)$ is fixed, and $e(G)$ is binomially distributed and hence tightly concentrated about its mean, it remains to estimate $f(G)$ and $\kappa(G)$.

Firstly, we will bound $f(G)$. We note that for our purposes in this section it would be sufficient to show that $f(G) = o(\sqrt{n_1n_2})$. However, since we will need a similar bound later, we will prove something slightly stronger below.

\begin{lemma}\label{l:cyclebound}
Let $G=G(n_1,n_2,p)$, where $1\ll n_1\leq n_2$, and $(n_1n_2)^{-\frac{1}{2}} \leq p$. Then for any function $h(n_1)= \omega(1)$  and $j\in \mathbb N$, whp
$$f(G)\leq h(n_1)j(p^2n_1n_2)^j + \frac{2}{j+1} \min \{ pn_1n_2,p^2n_1^2n_2\}.$$
	\end{lemma}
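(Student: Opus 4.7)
The plan is to combine the identity $\sum_{F} \ell(F) = 2e(G)$, valid in any cellular embedding (where $\ell(F)$ denotes the length of the boundary walk of a face $F$), with a first-moment bound on short cycles in $G(n_1,n_2,p)$. After fixing an embedding of $G$ realising the genus $g(G)$, I would split $f(G) = f_{\leq 2j}(G) + f_{>2j}(G)$ by face length. Since $G$ is bipartite, every face has even length, so any face with $\ell(F) > 2j$ in fact satisfies $\ell(F) \geq 2j+2$, and the identity gives
\[
f_{>2j}(G) \leq \frac{2e(G)}{2j+2} = \frac{e(G)}{j+1}.
\]
A Chernoff bound on the binomial variable $e(G)$ yields $e(G) \leq (1+o(1)) pn_1 n_2$ whp, so whp $f_{>2j}(G) \leq \frac{2pn_1 n_2}{j+1}$.

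For the short faces, each face $F$ with $\ell(F) \leq 2j$ has a closed boundary walk of length at most $2j$ which (modulo tree-like excursions, handled separately) contains a cycle of $G$ of length at most $2j$ as a subgraph. Since each edge of $G$ is on the boundary of at most two face-sides, a fixed cycle $C$ can appear as a subgraph of at most two face-boundary walks (otherwise the $|C|$ face-edge incidences contributed by each such face would exceed the available $2|C|$). This gives $f_{\leq 2j}(G) \leq 2 C_{\leq 2j}(G)$ up to lower-order corrections, where $C_{\leq 2j}(G)$ denotes the number of cycles in $G$ of length at most $2j$. Using $\mathbb{E}(C_{2k}) \leq \binom{n_1}{k}\binom{n_2}{k}(k!)^2 p^{2k}/(2k) \leq (p^2 n_1 n_2)^k$ and $p \geq (n_1 n_2)^{-1/2}$ (so $p^2 n_1 n_2 \geq 1$), we obtain $\mathbb{E}(C_{\leq 2j}(G)) \leq j (p^2 n_1 n_2)^j$, and Markov's inequality yields $C_{\leq 2j}(G) \leq h(n_1) j (p^2 n_1 n_2)^j$ whp for any $h(n_1) = \omega(1)$.

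To obtain the refinement with $p^2 n_1^2 n_2$ in the minimum (useful when $pn_1 \leq 1$), I would re-run the same analysis on the subgraph $G'$ of $G$ obtained by deleting all vertices of $N_2$ of degree one together with their incident edges. The three key facts are: $f(G) = f(G')$, because removing a leaf in an embedded graph only shortens the unique face containing it; $C_{\leq 2j}(G') \leq C_{\leq 2j}(G)$, since $G' \subseteq G$; and a direct calculation gives
\[
\mathbb{E}(e(G')) = n_1 n_2 p \bigl(1 - (1-p)^{n_1-1}\bigr) \leq p^2 n_1^2 n_2.
\]
Since our hypothesis forces $p^2 n_1^2 n_2 \geq n_1 \to \infty$, a Chernoff bound gives $e(G') \leq (1+o(1)) p^2 n_1^2 n_2$ whp, and applying the face-length split to $G'$ produces the long-face term $\frac{2 p^2 n_1^2 n_2}{j+1}$. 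Taking the better of the two bounds gives the stated $\min$.

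The main obstacle I anticipate is the careful bookkeeping for the short-face/short-cycle correspondence in the presence of tree-like boundary excursions — bridges and edges traversed twice can contribute to $f_{\leq 2j}$ without corresponding to a cycle, and must be counted separately via the number of small tree components or bridges. Under $p \geq (n_1 n_2)^{-1/2}$ these auxiliary counts are easily absorbed into the short-cycle term, but making the argument rigorous requires being explicit about how the bound $f_{\leq 2j}(G) \leq 2 C_{\leq 2j}(G)$ is modified in their presence, as well as carefully justifying that the subgraph $G'$ can be embedded without increasing the genus (which follows since leaf removal is a genus-preserving operation).
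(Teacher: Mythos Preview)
Your argument follows the same skeleton as the paper's: split faces by length at the threshold $2j$, bound short faces by the expected number of short cycles via Markov, and bound long faces via $\sum_F \ell(F) = 2e(G)$. Your flagging of tree-like boundary excursions is actually more careful than the paper, which simply asserts ``each face is bounded by a cycle'' without discussing bridges or tree components; the corrections you anticipate are indeed lower order.

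The genuine difference is in how the refined term $p^2n_1^2n_2$ is obtained. The paper does not pass to a subgraph; instead it counts directly the number $S$ of $2$-paths in $G$ with both endpoints in $N_1$, shows $\mathbb{E}(S)=\binom{n_1}{2}p^2n_2$ with $\text{Var}(S)\leq \mathbb{E}(S)(1+2p(n_1-2))$, and then observes that every face of length at least $2(j+1)$ contains at least $j+1$ such $2$-paths while each $2$-path lies in at most two face boundaries, giving $f_2(G)\leq \frac{2S}{j+1}$. Your leaf-removal route is also valid, but it carries a little extra baggage: you must argue that $g(G')=g(G)$ and hence $f(G')=f(G)$, that the face-length split transfers to $G'$, and that $e(G')$ is concentrated. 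On that last point, $e(G')$ is not a sum of independent Bernoullis, so the Chernoff bound of Theorem~\ref{theorem2.8} does not apply directly; you should instead use Chebyshev (writing $e(G')=\sum_{v\in N_2}\deg(v)\mathbb{1}[\deg(v)\geq 2]$ as a sum of i.i.d.\ terms and checking that the variance is $O(n_1^2n_2p^2)$, so that $\text{Var}(e(G'))/(p^2n_1^2n_2)^2=O(1/n_1)$). The paper's $2$-path count sidesteps all of this, at the cost of the slightly less obvious claim that a $2$-path bounds at most two faces.
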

	\begin{proof}
	Note that a cycle of length $2k$ contains $k$ vertices in $N_1$ and $k$ vertices in $N_2$, and given such a set of $2k$ vertices there are $\frac{k!(k-1)!}{2}$ possible cycles of length $2k$ on them. Hence, if we let $f_1(G)$ be the number of faces of length at most $2j$ in $G$, then since each face is bounded by a cycle, and each cycle bounds at most two faces,
		\begin{align*}
		\mathbb{E}(f_1(G)) &\leq  2 \sum_{k=2}^{j} \binom{n_1}{k}\binom{n_2}{k}\frac{k!(k-1)!}{2}p^{2k}\\
		&\leq \sum_{k=2}^{j} \left(p^2n_1n_2\right)^k\\
		&\leq j\left(p^2n_1n_2\right)^j\hspace{1cm}\left(\text{since}\,\, p^2n_1n_2\geq 1\right).
		\end{align*}
		By Markov's inequality, we have that whp $f_1(G)$ is at most $h(n_1) j(p^2n_1n_2)^j$.
		
	Note that, since $e(G)$ is binomially distributed, it follows that whp $e(G) \leq 2pn_1n_2$. If we let $f_2(G)$ be the number of faces of length at least $2(j+1)$  then, since every edge is in the boundary of at most two faces, it follows that 
	\[
	f_2(G) \leq \frac{2e(G)}{2(j+1)}
	\]
    and hence whp $f_2(G) \leq \frac{2}{j+1}pn_1n_2$. However, if $n_1  \ll n_2$ then we can in fact bound this second term by a smaller quantity.
    
    We let 
    \[
    \mathcal{P} = \{ 2\text{-paths in } K_{n_1,n_2} \text{ with both endpoints in } N_1 \}.
    \]
    The idea is to count the number of 2-paths in $\mathcal{P}$ which appear in $G$. Let $S$  be the number of such 2-paths. We have
	 $$\mathbb{E}(S) =  \binom{n_1}{2}p^2n_2,$$
	 and
	 \begin{align*}
	     \text{Var}(S)&=\sum_{U\in \mathcal{P}}\mathbb{P}(U\in G)\sum_{\substack{V\in \mathcal{P},\\ V\ne U}}\left(\mathbb{P}(V\in G|U\in G)-\mathbb{P}(V\in G)\right)\\
	     &\leq  \mathbb{E}(S)\left(1+ 2p(n_1-2)\right).
	 \end{align*} 
	 Hence, by Chebyshev's inequality,
	 \[
	 \mathbb{P}(S \geq p^2n_1^2n_2) \leq \frac{1 + 2p(n_1-2)}{\binom{n_1}{2}p^2n_2} = o(1).
	 \]
		
		However, each $2$-path can be in the boundary of at most two faces, and each face of length at least $2(j+1)$ will contain at least $j+1$ many $2$-paths in $\mathcal{P}$. Hence, the number of faces of length at least $2(j+1)$ will satisfy
		\[
		    f_2(G)\leq \frac{2}{j+1} S,
		\]
		and so whp $f_2(G) \leq \frac{2}{j+1}p^2n_1^2n_2$.
		
It follows that whp
\[
f(G) = f_1(G) + f_2(G) \leq h(n_1) j(p^2n_1n_2)^j + \frac{2}{j+1} \min \left\{ pn_1n_2,p^2n_1^2n_2\right\},
\]
as claimed.
	\end{proof}

	\begin{corollary}\label{lemma8.1}
	Let $G=G(n_1,n_2,p)$, where $1\ll n_1\leq n_2$, and $p=\frac{d}{\sqrt{n_1n_2}}$ for constant $d>1$.  Then whp
		$$f(G)=o(n_1).$$
	\end{corollary}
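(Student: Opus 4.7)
The plan is to specialize Lemma \ref{l:cyclebound} to the choice $p = d/\sqrt{n_1 n_2}$ (which satisfies the hypothesis $p \geq (n_1n_2)^{-1/2}$ since $d > 1$) and then tune the parameters $j$ and $h$ in the resulting inequality. Substituting gives $p^2 n_1 n_2 = d^2$ and $p^2 n_1^2 n_2 = d^2 n_1$; in particular $(p^2 n_1 n_2)^j = d^{2j}$ is a constant for every fixed $j \in \mathbb{N}$. Regardless of the relationship between $n_1$ and $n_2$, one has
$$\min\{pn_1n_2,\, p^2n_1^2n_2\} \leq p^2 n_1^2 n_2 = d^2 n_1,$$
so Lemma \ref{l:cyclebound} yields that, for any fixed $j \in \mathbb{N}$ and any function $h(n_1) = \omega(1)$, whp
$$f(G) \leq h(n_1)\, j\, d^{2j} + \frac{2 d^2}{j+1}\, n_1.$$

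To conclude, I would argue via a standard $\varepsilon$-argument. Fix $\varepsilon > 0$. First, choose $j$ (depending only on $d$ and $\varepsilon$) large enough that $\frac{2d^2}{j+1} < \varepsilon/2$; then pick any function $h(n_1) = \omega(1)$ that is also $o(n_1)$, for instance $h(n_1) = \log\log n_1$. Since $j$ and $d$ are then both constants, the first term $h(n_1)\, j\, d^{2j}$ is $O(h(n_1)) = o(n_1)$ and hence at most $\varepsilon n_1 / 2$ for all sufficiently large $n_1$. Combining these two estimates inside the whp event of Lemma \ref{l:cyclebound} gives $f(G) \leq \varepsilon n_1$ whp, and since $\varepsilon > 0$ was arbitrary this yields $f(G) = o(n_1)$ whp, as claimed.

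There is essentially no obstacle to this argument; the one conceptual point is recognising that when $p = d/\sqrt{n_1 n_2}$ the short-cycle part of the bound of Lemma \ref{l:cyclebound} is essentially constant once $j$ is fixed (because $p^2 n_1 n_2 = d^2$), so the dominant contribution comes from the linear term $\frac{2 d^2}{j+1} n_1$ arising from long faces, whose prefactor can be made arbitrarily small by taking $j$ sufficiently large.
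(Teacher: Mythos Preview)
Your proof is correct and follows the same strategy as the paper: apply Lemma \ref{l:cyclebound} with $p^2n_1n_2 = d^2$ and bound the two resulting terms. The only difference is in parameter selection: the paper takes $j = h(n_1) = \tfrac{1}{4}\log_d n_1$ growing with $n_1$, obtaining an explicit $o(n_1)$ bound directly, whereas you keep $j$ a constant and run an $\varepsilon$-argument---which is exactly the device the paper itself uses later in Corollary \ref{corollary5.2}. Both choices work; yours is arguably cleaner here since $(p^2n_1n_2)^j = d^{2j}$ stays bounded for fixed $j$, so there is no need to balance a growing first term against the second.
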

	\begin{proof}
	We apply Lemma \ref{l:cyclebound} with $h(n_1) = j = \frac{1}{4} \log_d n_1$ to see that whp
	\begin{align*}
	f(G) &\leq h(n_1)j(p^2n_1n_2)^j + \frac{2}{j+1} p^2n_1^2n_2\\
	&\leq \frac{1}{16} \log_d^2 n_1  d^{\frac{1}{2}\log_d n_1} + \frac{8}{\log_d n_1 +1} d^2 n_1\\
	&\leq n_1 \left( \frac{\log_d^2 n_1}{16 \sqrt{n_1}} + \frac{8d^2}{\log_d n_1 +1} \right)\\
	&= o(n_1).
	\end{align*}
	
	\end{proof}
	
	 Next, we need to find $\kappa(G)$. By Theorem \ref{theorem3.2:Johansson}, all but one component $C$ of $G$ satisfies $|C\cap N_1|\leq  \beta_0\log^2n_1$ and so we only need to estimate the number of such components.
	\begin{definition}
	A component $C$ of $G(n_1,n_2,p)$ is \emph{small} if $|C\cap N_1|\leq  \beta_0\log^2n_1$, where $ \beta_0$ is as in Theorem~\ref{theorem3.2:Johansson}.
	\end{definition} 
	
	The following lemma, which follows from \cite[Lemma 2]{Johansson}, is a simple consequence of the Chernoff bounds.
	\begin{lemma}\label{lemma3.1}
		 Let $G=G(n_1,n_2,p)$ where $1 \ll n_1=\lambda n_2$ for constant $\lambda\leq 1$. If $p=\frac{d}{\sqrt{n_1n_2}}$ for constant $d>1$, then whp every small component $C$ in $G$ satisfies 
		 $$|C\cap N_2| = (1+o(1))pn_2|C \cap N_1|.$$
	\end{lemma}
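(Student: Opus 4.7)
The plan is to translate the statement into a concentration claim about the neighbourhood of a subset of $N_1$ and then apply the Chernoff bound together with a union bound. Observe first that for any component $C$ of $G$, writing $A:=C\cap N_1$, we have $|C\cap N_2|=|N(A)|$: since $G$ is bipartite, every vertex of $C\cap N_2$ is adjacent to some vertex of $A$ (so $C\cap N_2\subseteq N(A)$), and conversely every vertex of $N(A)$ is joined to some vertex of $A\subseteq C$ and hence lies in $C$. It therefore suffices to control $|N(A)|$ for every subset $A\subseteq N_1$ of size $k\leq \beta_0\log^2 n_1$ which actually arises as the $N_1$-part of a small component.

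For a fixed subset $A\subseteq N_1$ with $|A|=k$, the random variable $|N(A)|$ is a sum of $n_2$ independent Bernoullis, each with success probability $q_k:=1-(1-p)^k$. For $k\leq \beta_0\log^2 n_1$ we have
\[
pk \;\leq\; \frac{d\beta_0\log^2 n_1}{\sqrt{n_1 n_2}} \;=\; o(1),
\]
so $q_k=pk\bigl(1+o(1)\bigr)$ by Taylor expansion, and hence $\mathbb{E}|N(A)| = (1+o(1))\,p k n_2$. Theorem~\ref{theorem2.8} then yields, for any fixed $\epsilon>0$,
\[
\mathbb{P}\Bigl(\bigl||N(A)|-pkn_2\bigr|\geq \epsilon\, pkn_2\Bigr) \;\leq\; 2\exp\bigl(-c_\epsilon\, pkn_2\bigr)
\]
for some constant $c_\epsilon>0$.

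The argument is then completed by a union bound over $k$ and over the $k$-subsets $A\subseteq N_1$. The hard part lies precisely here: in the balanced regime $pn_2 = d/\sqrt{\lambda}$ is a constant, so the Chernoff estimate above gives only exponential decay in $k$, which by itself is not enough to compensate for the $\binom{n_1}{k}$ possible subsets. Following \cite[Lemma~2]{Johansson}, one gains the extra saving by restricting to those $A$ which actually arise as the $N_1$-part of a component: the requirement that no edges join $N(A)$ to $N_1\setminus A$ contributes a further factor of order $(1-p)^{|N(A)|(n_1-k)}$ to the relevant probability, and this exponential suppression absorbs the combinatorial count $\binom{n_1}{k}$. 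Combining the Chernoff concentration of $|N(A)|$ with this suppression, the union bound closes and yields that whp every small component $C$ satisfies $|C\cap N_2|=(1+o(1))pn_2|C\cap N_1|$, as required.
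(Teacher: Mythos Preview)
Your overall strategy matches the paper's: both rewrite $|C\cap N_2|$ as $|N(A)|$ for $A=C\cap N_1$, note that $|N(A)|$ is binomial with mean $(1+o(1))pkn_2$, apply Chernoff, and defer the delicate step to \cite[Lemma~2]{Johansson}. You also correctly identify the obstacle: in the balanced regime $pn_2=d/\sqrt{\lambda}$ is a constant, so Chernoff only gives decay $\exp(-\Theta(k))$, hopeless against $\binom{n_1}{k}$ choices of $A$.

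The gap is in your proposed remedy. The isolation factor you invoke, $(1-p)^{|N(A)|(n_1-k)}$, is for typical $|N(A)|\approx pkn_2$ of order $\exp\bigl(-p^2 kn_1n_2\bigr)=\exp(-d^2k)$: this is again merely exponential in $k$ and therefore does \emph{not} absorb $\binom{n_1}{k}$. Already at $k=1$ your bound reads $n_1\cdot e^{-c_\epsilon d/\sqrt{\lambda}}\cdot e^{-d^2}=\Theta(n_1)$, and for general $k\le\beta_0\log^2 n_1$ one has $\binom{n_1}{k}e^{-\Theta(k)}\ge (en_1/k)^k e^{-\Theta(k)}\to\infty$, so the union bound does not close. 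The missing ingredient is the second structural constraint: for $A$ to equal $C\cap N_1$ for some component $C$, the bipartite graph on $A\cup N(A)$ must also be \emph{connected}. It is this connectedness (handled, e.g., via a spanning-tree count as in Lemma~\ref{bipartitesubgraphs} and the kind of calculation carried out in Lemma~\ref{lemma3.4}, or via an exploration argument) together with the isolation factor that makes the expected number of bad configurations summable; the edge-absence factor alone cannot do what your last paragraph asserts.
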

	\begin{definition}
	A component $C$ of $G(n_1,n_2,p)$ is \emph{balanced} if $|C\cap N_2| \leq 2 pn_2|C \cap N_1|$.
	\end{definition} 
	Again, by Lemma \ref{lemma3.1} we only need to estimate the number of small balanced components.
	
	Let us first show that the number of small balanced unicyclic or complex components is negligible compared to the number of small balanced tree components.
	
	\begin{lemma}\label{lemma3.3}
Let $G=G(n_1,n_2,p)$ where $1 \ll n_1=\lambda n_2$ for constant $\lambda\leq 1$. If $p=\frac{d}{\sqrt{n_1n_2}}$ for constant $d>1$, then whp the number of small balanced unicyclic components in $G$ is $o(\log^5 n_1)$.
	\end{lemma}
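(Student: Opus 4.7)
The plan is a direct first moment bound on the random variable $X$ counting small balanced unicyclic components, classified by $r := |C\cap N_1|$ and $s := |C\cap N_2|$. Since a bipartite unicyclic graph must contain a cycle of length at least four, $r,s \geq 2$, and the smallness and balancedness conditions give $r \leq \beta_0 \log^2 n_1$ and $s \leq 2pn_2\, r = (2d/\sqrt{\lambda})\, r$, so in particular $rs = O(\log^4 n_1)$ throughout the relevant range.

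For fixed $(r,s)$ I would enumerate the unicyclic bipartite graphs on classes of sizes $r$ and $s$ by taking a spanning tree of $K_{r,s}$ and adding one extra edge; since every bipartite cycle has length at least four, each such unicyclic graph is produced at least four times this way, so Lemma \ref{bipartitesubgraphs} yields at most $\tfrac14 r^{s-1}s^{r-1}\cdot rs$ of them. Writing $U_{r,s}$ and $T_{r,s}$ for the numbers of unicyclic and tree components respectively with $r$ vertices in $N_1$ and $s$ vertices in $N_2$, the usual component probability calculation (choose the vertices, prescribe the interior edges, forbid all crossing edges) gives
$$\mathbb{E}(U_{r,s}) \leq \binom{n_1}{r}\binom{n_2}{s}\,\frac{r^{s-1}s^{r-1}\,rs}{4}\,p^{r+s}(1-p)^{rn_2+sn_1-r-s-rs} = \frac{rs\,p}{4(1-p)}\,\mathbb{E}(T_{r,s}),$$
so the unicyclic count inherits a bonus factor of order $rsp$ compared with the tree count.

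Summing over the small balanced range and pulling $rs = O(\log^4 n_1)$ out of the sum,
$$\mathbb{E}(X) \leq O\bigl(p\log^4 n_1\bigr)\sum_{r\geq 2,\, s\geq 2}\mathbb{E}(T_{r,s}) \leq O\bigl(p\log^4 n_1\bigr)\cdot \tfrac{n_1}{2} = O\bigl(\log^4 n_1\bigr),$$
where I used that every tree component with $r\geq 2$ deterministically uses at least two vertices of $N_1$, hence $\sum_{r\geq 2,\,s}T_{r,s} \leq n_1/2$ pointwise, and that $pn_1 = d\sqrt{\lambda} = O(1)$. Applying Markov's inequality at the threshold $\log^{4.5} n_1$ then gives $X = o(\log^5 n_1)$ whp, as required.

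I expect the main obstacle to be arranging the comparison with tree components cleanly: a naive first moment estimate would leave a sum involving $r^{s-1}s^{r-1}$ and $(1-p)^{\text{large}}$ that is painful to estimate directly, whereas this reduction replaces the sum by the trivial deterministic bound on the number of components, which suffices precisely because the extra factor $rsp$ is small in the small balanced regime. The combinatorics of unicyclic bipartite graphs, and the exponential weights, cause no further difficulty since only an upper bound is needed.
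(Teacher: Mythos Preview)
Your proof is correct and follows essentially the same approach as the paper: compare the expected number of small balanced unicyclic components to that of tree components via the extra factor of order $rsp$, then use the trivial deterministic bound on the number of tree components (you use $\sum_{r\geq 2} T_{r,s} \leq n_1/2$, the paper uses $X_T \leq n_1+n_2$, both giving $O(n_1)$) together with $pn_1 = O(1)$ and Markov's inequality. The only cosmetic difference is that the paper sets this up as a map $U\mapsto T(U)$ from unicyclic subgraphs to spanning subtrees with fibre size at most $|T\cap N_1|\cdot|T\cap N_2|$, whereas you parameterise by $(r,s)$ and compute the ratio $\mathbb{E}(U_{r,s})/\mathbb{E}(T_{r,s})$ directly via Scoins' formula; these are two packagings of the same comparison.
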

	\begin{proof}
	Let us write $\mathcal{T}$ for the set of subgraphs $T$ of $K_{n_1,n_2}$ such that $T$ is a small balanced tree, i.e., $|T\cap N_1|\leq \beta_0\log^2n_1$ and $|T\cap N_2| \leq 2 pn_2|T\cap N_1|$. Similarly, let us write $\mathcal{U}$ for the set of small balanced unicyclic subgraphs $U$ of $K_{n_1,n_2}$. Note that every $U \in \mathcal{U}$ contains some $T \in \mathcal{T}$. Let us choose, for each $U \in \mathcal{U}$ one $T(U) \in \mathcal{T}$ such that $T(U) \subseteq U$. Note that for each $T \in \mathcal{T}$ there are at most
	\[
	|T \cap N_1| \cdot |T\cap N_2|\leq 2 \sqrt{\lambda} \beta_0^2 d \log^4 n_1 = o\left( \log^{5} n_1 \right)
	\]
	many $U$ such that $T = T(U)$.
	
	Now, for every $U \in \mathcal{U}$ it is clear that
	$$
	\mathbb{P}( U \text{ is a component in } G) = \frac{p}{1-p} \mathbb{P}(T(U) \text{ is a component in } G).
	$$
	 Hence, if we let $X_U$ be the number of small balanced unicyclic components and $X_T$ be the number of small balanced tree components in $G$, then we have
	\begin{align*}
	 \mathbb{E}(X_U) &\leq \sum_{U \in \mathcal{U}} \mathbb{P}(U \text{ is a component in } G)  \\
	 &=\frac{p}{1-p} \sum_{U \in \mathcal{U}} \mathbb{P}(T(U) \text{ is a component in } G)\\
	 &\le 2p \sum_{T \in \mathcal{T}}	|T \cap N_1| \cdot |T \cap N_2| \ \mathbb{P}(T \text{ is a component in } G) \\
	 &= o\left( \log^{5} n_1 \right)p\sum_{T \in \mathcal{T}} \mathbb{P}(T \text{ is a component in } G) \\
	 &= o\left( \log^{5} n_1 \right)p\mathbb{E}(X_T) \\
	 &\leq o\left( \log^{5} n_1 \right)p(n_1+n_2) \\
	 &= o\left( \log^{5} n_1 \right),
	\end{align*}
	since $X_T \leq n_1 + n_2$. It then follows by Markov's inequality that whp $X_U = o\left( \log^{5} n_1 \right) $.
		\end{proof}
		
		Using similar methods we can show that whp there are no small balanced {\em complex} components in $G$. However, since the calculations are similar to the above, we defer the proof of the following lemma to Appendix \ref{app}.

	\begin{lemma}\label{lemma8.2}
Let $G=G(n_1,n_2,p)$ where $1 \ll n_1=\lambda n_2$ for constant $\lambda\leq 1$. If $p=\frac{d}{\sqrt{n_1n_2}}$ for constant $d>1$, then whp there are no small balanced complex components in $G$.
	\end{lemma}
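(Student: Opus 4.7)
The plan is to follow the argument for Lemma \ref{lemma3.3}, but now associate to each small balanced complex component a spanning tree in $\mathcal{T}$, exploiting the fact that a complex subgraph contains at least \emph{two} more edges than any of its spanning trees. Let $\mathcal{K}$ denote the set of small balanced complex subgraphs of $K_{n_1,n_2}$, and for each $C \in \mathcal{K}$ fix a spanning tree $T(C) \subseteq C$ with $V(T(C)) = V(C)$. Since $|T(C)\cap N_i| = |C \cap N_i|$ for $i=1,2$, we have $T(C) \in \mathcal{T}$. Exactly as in Lemma \ref{lemma3.3}, since $T(C)$ and $C$ have the same vertex set, the ratio of the probabilities that $C$ and $T(C)$ are components of $G$ is
$$\mathbb{P}(C \text{ is a component in } G) = \left(\frac{p}{1-p}\right)^{e(C)-e(T(C))} \mathbb{P}(T(C) \text{ is a component in } G),$$
and $e(C) - e(T(C)) \geq 2$ since $C$ is complex.

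Next I would bound, for each fixed $T \in \mathcal{T}$ with parts $(V_1,V_2)$, the number of ways to promote $T$ to a complex graph $C$ with $T(C) = T$. Writing $m = |V_1|\cdot|V_2| - e(T)$ for the number of potential extra edges, the number of such $C$ with exactly $j \geq 2$ extra edges is at most $\binom{m}{j}$. Because $T$ is small and balanced, $|V_1| \leq \beta_0 \log^2 n_1$ and $|V_2| \leq 2pn_2 |V_1| = O(\log^2 n_1)$ (using $pn_2 = d/\sqrt{\lambda}$), so $m \leq M := c\log^4 n_1$ for some constant $c = c(d,\lambda,\beta_0)$. Since $pM = o(1)$, the standard inequality $e^x - 1 - x \leq x^2 e^x/2$ applied with $x = pM/(1-p)$ gives
$$\sum_{\substack{C \in \mathcal{K} \\ T(C)=T}} \left(\frac{p}{1-p}\right)^{e(C)-e(T)} \leq \sum_{j\geq 2} \binom{m}{j}\left(\frac{p}{1-p}\right)^{j} = O\bigl((pM)^2\bigr).$$

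Combining these two steps and using $\mathbb{E}(X_T) \leq n_1 + n_2$ as in Lemma \ref{lemma3.3},
$$\mathbb{E}(X_{\mathcal{K}}) \leq \sum_{T \in \mathcal{T}} \mathbb{P}(T \text{ is a component in } G) \cdot O\bigl((pM)^2\bigr) = O\bigl((pM)^2\bigr)\,(n_1+n_2).$$
Since $p^2 = d^2/(n_1 n_2)$, $M = O(\log^4 n_1)$ and $n_1 + n_2 = O(n_2)$, we obtain
$$\mathbb{E}(X_{\mathcal{K}}) = O\!\left(\frac{\log^8 n_1}{n_1}\right) = o(1),$$
and Markov's inequality then yields $X_{\mathcal{K}} = 0$ whp.

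The main obstacle is purely bookkeeping: one must be careful that $T(C) \in \mathcal{T}$ (which is automatic from $V(T(C)) = V(C)$), that the extra-edge count is controlled uniformly over $T$ (which follows because small balanced trees have $|V_1|\cdot|V_2|$ polylogarithmic in $n_1$), and that the resulting sum $\sum_{j\geq 2}$ is dominated by its $j=2$ term (which follows from $pM=o(1)$). No new probabilistic idea beyond Lemma \ref{lemma3.3} is required; the savings of an extra factor $pM = o(1)$ compared to the unicyclic case is exactly what makes the expected count tend to zero rather than merely being $o(\log^5 n_1)$.
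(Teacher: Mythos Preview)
Your proposal is correct and follows essentially the same approach as the paper's proof: associate to each small balanced complex object a spanning tree in $\mathcal{T}$, and exploit the extra factor of $p^2$ coming from the at least two surplus edges to drive the expectation to $o(1)$.

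The one cosmetic difference is that the paper restricts attention to spanning subgraphs $C$ with \emph{exactly} two cycles (so exactly two surplus edges), bounds the number of such $C$ with $T(C)=T$ crudely by $(|T\cap N_1|\cdot|T\cap N_2|)^2 = o(\log^9 n_1)$, and then observes that every small balanced complex component contains such a spanning $C$. You instead sum directly over all complex $C\in\mathcal{K}$ and all $j\ge 2$ extra edges, using $pM=o(1)$ to see the geometric-type sum is $O((pM)^2)$. Your route avoids the extra reduction step and is arguably cleaner; the paper's route avoids the (easy) tail estimate. Either way the arithmetic ends at $p^2\cdot\mathrm{polylog}(n_1)\cdot(n_1+n_2)=o(1)$.
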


	 By Lemmas \ref{lemma3.3} and \ref{lemma8.2} it suffices to estimate the number of small balanced tree components in $G(n_1,n_2,p)$.

	 \begin{lemma}\label{lemma3.4}
	Let $G=G(n_1,n_2,p)$ where $1 \ll n_1=\lambda n_2$ for constant $\lambda\leq 1$. If $p=\frac{d}{\sqrt{n_1n_2}}$ for constant $d>1$, then whp the number of small balanced tree components in $G$ is \[(1+o(1))\nu(d,\lambda)n_1,\]
	where
		\begin{align}\label{e:nu}
\nu(d,\lambda)=\frac{1}{d\sqrt{\lambda}}\sum_{k=1}^{\infty}\left(\frac{d}{\sqrt{\lambda}}e^{-\frac{d}{\sqrt{\lambda}}}\right)^k\sum_{\substack{r+s=k,\\ 0\leq r,s\leq k}}\frac{r^{s-1}s^{r-1}}{r!s!}\lambda^re^{-\frac{d(\lambda-1)}{\sqrt{\lambda}}s}.
	   \end{align}
	\end{lemma}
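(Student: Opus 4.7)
The plan is to prove this lemma by first and second moment arguments applied to the number $X$ of small balanced tree components of $G$. By Lemma~\ref{lemma3.1}, whp every small component is balanced, so whp $X$ equals the number $\widetilde{X}$ of small tree components, and it suffices to show $\widetilde{X} = (1+o(1))\nu(d,\lambda)n_1$ whp. Decompose $\widetilde{X} = \sum_{r,s} Y_{r,s}$, where $Y_{r,s}$ counts tree components with exactly $r$ vertices in $N_1$ and $s$ vertices in $N_2$, with $r$ summed over $1 \leq r \leq \beta_0 \log^2 n_1$ and $s \geq 0$. Using Lemma~\ref{bipartitesubgraphs},
\[
\mathbb{E}(Y_{r,s}) = \binom{n_1}{r}\binom{n_2}{s}\, r^{s-1} s^{r-1}\, p^{r+s-1}\, (1-p)^{rn_2 + sn_1 - rs - (r+s-1)},
\]
where the $(1-p)$ exponent counts all non-tree edges inside and all edges leaving the prescribed vertex set. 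Applying the standard approximations $\binom{n_i}{k} = (1+o(1))\,n_i^k/k!$ and $(1-p)^A = (1+o(1))e^{-pA}$ (both uniform in the relevant range since $p^2(rn_2+sn_1) = o(1)$), and substituting $pn_1 = d\sqrt{\lambda}$ and $pn_2 = d/\sqrt{\lambda}$, a short computation yields
\[
\mathbb{E}(Y_{r,s}) = (1+o(1))\,\frac{n_1}{d\sqrt{\lambda}}\left(\frac{d}{\sqrt{\lambda}}\,e^{-d/\sqrt{\lambda}}\right)^{r+s}\frac{r^{s-1}s^{r-1}}{r!s!}\,\lambda^{r}\,e^{-d(\lambda-1)s/\sqrt{\lambda}},
\]
which matches term by term the series defining $\nu(d,\lambda)n_1$.

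Since $d > 1$ and $\lambda \leq 1$ force $d/\sqrt{\lambda} > 1$ and hence $(d/\sqrt{\lambda})e^{-d/\sqrt{\lambda}} < e^{-1}$, Stirling-type bounds on $r^{s-1}s^{r-1}/(r!s!)$ show that the series defining $\nu(d,\lambda)$ converges geometrically in $k = r+s$, so truncating at $r \leq \beta_0 \log^2 n_1$ (and noting that the $s$-tail for each $r$ is summable via the Poisson-like factor $(pn_2)^s/s!$) costs only an $o(1)$ relative error, giving $\mathbb{E}(\widetilde{X}) = (1+o(1))\nu(d,\lambda) n_1$. For the second moment, I would enumerate ordered pairs of vertex-disjoint labelled trees. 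Comparing non-edge counts directly shows that for disjoint trees $T_1, T_2$ of types $(r_1,s_1)$ and $(r_2,s_2)$,
\[
\mathbb{P}(T_1,\, T_2 \text{ are both components}) = \mathbb{P}(T_1 \text{ is a component})\,\mathbb{P}(T_2 \text{ is a component})\,(1-p)^{-(r_1 s_2 + s_1 r_2)},
\]
the correction arising because the $r_1 s_2 + s_1 r_2$ edges between the vertex sets of $T_1$ and $T_2$ would otherwise be counted as absent twice. Since $p(r_1 s_2 + s_1 r_2) = O(p\log^4 n_1) = o(1)$ and $\binom{n_i - k_1}{k_2} = (1+o(1))\binom{n_i}{k_2}$ in our range, this yields $\mathbb{E}(\widetilde{X}(\widetilde{X}-1)) = (1+o(1))\mathbb{E}(\widetilde{X})^2$. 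Hence $\mathrm{Var}(\widetilde{X}) = o(\mathbb{E}(\widetilde{X})^2)$, and Chebyshev's inequality concludes that whp $\widetilde{X} = (1+o(1))\nu(d,\lambda)n_1$.

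The hardest technical point is the uniformity of all $o(1)$ errors as $(r,s)$ ranges up to polylogarithmic orders in $n_1$. Both the first moment asymptotic (especially the replacement $(1-p)^A \approx e^{-pA}$ and the binomial approximation) and the covariance correction $(1-p)^{-(r_1 s_2 + s_1 r_2)}$ must be controlled with an error that remains summable across all relevant types; careful bookkeeping of the Stirling and exponential approximations on the extreme tails, combined with the geometric decay coming from $(d/\sqrt{\lambda})e^{-d/\sqrt{\lambda}} < e^{-1}$, is required to push this through.
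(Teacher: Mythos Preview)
Your approach is essentially the paper's: compute the first moment via Lemma~\ref{bipartitesubgraphs}, rewrite it as $(1+o(1))\nu(d,\lambda)n_1$, then run Chebyshev after splitting the second moment into disjoint and overlapping pairs. Two points deserve comment.

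First, for the convergence of the series defining $\nu(d,\lambda)$, the paper does \emph{not} use Stirling-type bounds. Instead it notes that the count $X_T$ of tree components with $|T|\le\log^3 n_1$ is at most $n_1+n_2=(1+\lambda^{-1})n_1$ deterministically, so the partial sums $\zeta(d,n_1,\lambda)$ are increasing in $n_1$ and bounded by $2/\lambda$, hence convergent. This one-line trick sidesteps the tail analysis you flag as the hardest technical point.

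Second, there is an inconsistency between your first and second moment setups. You define $\widetilde{X}$ by summing $Y_{r,s}$ over all $s\ge 0$, but then in the covariance step you assert $p(r_1s_2+s_1r_2)=O(p\log^4 n_1)$, which presupposes $s_i=O(\log^2 n_1)$. The paper avoids this by passing at the outset (via Lemmas~\ref{lemma3.1}, \ref{lemma3.3}, \ref{lemma8.2}) from small balanced tree components to tree components with $r+s\le\log^3 n_1$; since for a small balanced tree $s\le 2pn_2 r=O(\log^2 n_1)$, these coincide whp, and now both $r$ and $s$ are polylogarithmic throughout the variance computation. With that truncation your covariance estimate is correct as written; without it you would still need to control the large-$s$ contribution to $\mathrm{Var}(\widetilde{X})$ separately.
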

	\begin{proof}
	Let $\mathcal{T}$ be as in Lemma \ref{lemma3.3} and let us denote by $X'_T$ to be the number of small balanced tree components in $G$. For any $T\in \mathcal{T}$ it follows that
	\[
	|T| = |T \cap N_1| + |T \cap N_2| \leq (1+2pn_2)\beta_0 \log^2 n_1 \leq \log^3n_1,
	\]
		when $n_1$ is large enough. Let $X_T$ be the number of tree components with $|T| \leq \log^3 n_1$. By Lemmas \ref{lemma3.3} and \ref{lemma8.2} whp $X_T = X'_T$.
		
		We can estimate that
		\[
			\mathbb{E}(X_T) =\sum_{k=1}^{\log^3n_1}\sum_{\substack{r+s=k,\\ 0\leq r,s\leq k}}\binom{n_1}{r}\binom{n_2}{s}r^{s-1}s^{r-1}p^{k-1}(1-p)^{r(n_2-s)+s(n_1-r)+rs -k+1},
		\]
		where the individual terms come from, for each $r+s =k \leq \log^3 n_1$, choosing a potential tree component with $|T \cap N_1| =r$ and $|T \cap N_2|=s$, which we can do in $\binom{n_1}{r}\binom{n_2}{s}r^{s-1}s^{r-1}$ many ways by Lemma \ref{bipartitesubgraphs}, each of which is a component of $G$ if and only if the $k-1$ edges in $T$ appear, and the $r(n_2-s)+s(n_1-r)+rs -k+1$ edges which have at least one endpoint in $V(T)$, but are not in $T$, do not appear in $G$. However, using the standard estimate that $(1-x) = e^{-x + O(x^2)}$ , we see that
		\begin{align*}
		(1-p)^{r(n_2-s)+s(n_1-r)+rs -k+1} &= \exp\left({-\left(n_2r+n_1s-rs-k+1\right)\left( p+O(p^2)\right)}\right)\\
		&=\exp\left({-\frac{d}{\sqrt{n_1n_2}}\left(n_2r+n_1s\right)+o(1)}\right)\\
		&= \exp\left({-\frac{d}{\sqrt{\lambda}}(r+\lambda s)+o(1)}\right).
		\end{align*}

It follows that $\mathbb{E}(X_T)$ is equal to
\small
		\begin{align*}
	&\sum_{k=1}^{\log^3n_1}\sum_{\substack{r+s=k,\\ 0\leq r,s\leq k}}\frac{(n_1)_r}{r!}\frac{(n_2)_s}{s!}r^{s-1}s^{r-1}\left(\frac{d}{\sqrt{n_1n_2}}\right)^{k-1}\exp\left({-\frac{d}{\sqrt{\lambda}}(r+\lambda s)+o(1)}\right)\nonumber \\ 
		&= \sum_{k=1}^{\log^3n_1}\sum_{\substack{r+s=k,\\ 0\leq r,s\leq k}} \frac{r^{s-1}s^{r-1}}{r!s!}d^{k-1}\frac{(n_1)_r(n_2)_s}{(\sqrt{n_1n_2})^{k-1}}\exp\left({-\frac{d}{\sqrt{\lambda}}(r+\lambda s)+o(1)}\right)\nonumber \\
		&=\sum_{k=1}^{\log^3n_1}\sum_{\substack{r+s=k,\\ 0\leq r,s\leq k}}\frac{r^{s-1}s^{r-1}}{r!s!}d^{k-1}\frac{(n_1)_r(n_2)_s}{(\sqrt{n_1n_2})^{k-1}}\exp\left({-\frac{d}{\sqrt{\lambda}}(r+\lambda s)+o(1)}\right)\nonumber \\
		&= \frac{1}{d}\sum_{k=1}^{\log^3n_1}\left(de^{-\frac{d}{\sqrt{\lambda}}}\right)^k \sum_{\substack{r+s=k,\\ 0\leq r,s\leq k}}\frac{r^{s-1}s^{r-1}}{r!s!} \frac{(n_1)_r(n_2)_s}{(\sqrt{n_1n_2})^{k-1}}\exp\left(-\frac{d}{\sqrt{\lambda}}(\lambda-1)s+o(1)\right), 
		\end{align*}
		\normalsize
		where we used that $r=k-s$ in the final line.
		
		Note that for any $r\leq \log^3n_1$
		\begin{align*}
		(n_1)_r&= n_1^r\prod_{t=0}^{r-1}\left(1-\frac{t}{n_1}\right)=n_1^r\exp{\left(\sum_{t=0}^{r-1}\log{\left(1-\frac{t}{n_1}\right)}\right)}\\
		&=n_1^r\exp{\left(\sum_{t=0}^{r-1}\left(-\frac{t}{n_1}-O\left(\frac{t^2}{n_1^2}\right)\right)\right)}\\
		&=n_1^r\exp{\left(-\frac{r(r-1)}{2n_1}-O\left(\frac{r^3}{n_1^2}\right)\right)}\\
		&=(1+o(1))n_1^r.
		\end{align*}
		Similarly, for any $s\leq \log^3n_1$,
		$(n_2)_s=(1+o(1))n_2^s$. Hence, using $n_1=\lambda n_2$ and $r+s=k$, we obtain
		\begin{align}
		\frac{(n_1)_r(n_2)_s}{(\sqrt{n_1n_2})^{k-1}}=(1+o(1))\frac{(\lambda n_2)^rn_2^s}{(n_2\sqrt{\lambda})^{k-1}}=(1+o(1))\frac{1}{(\sqrt{\lambda})^{k+1}}\lambda^r n_1.
		\end{align}
		Hence,
		\begin{align*}
		\mathbb{E}(X_T)=(1+o(1))\zeta(d,n_1,\lambda)n_1,
		\end{align*}
		where
		$$\zeta(d,n_1,\lambda)=\frac{1}{d\sqrt{\lambda}}\sum_{k=1}^{\log^3 n_1}\left(\frac{d}{\sqrt{\lambda}}e^{-\frac{d}{\sqrt{\lambda}}}\right)^k\sum_{\substack{r+s=k,\\ 0\leq r,s\leq k}}\frac{r^{s-1}s^{r-1}}{r!s!}\lambda^re^{-\frac{d(\lambda-1)}{\sqrt{\lambda}}s}.$$
		However, since $X_T\leq n_1 + n_2 = (1 + \frac{1}{\lambda})n_1$, $\mathbb{E}(X_T)\leq \left(1 + \frac{1}{\lambda}\right)n_1$. Hence, for large enough $n_1$, $\zeta(d,n_1,\lambda)\leq \frac{2}{\lambda}$. Furthermore, $\zeta(d,n_1,\lambda)\leq \zeta(d,n_1+1,\lambda)$  for any $n_1$. So that $\left(\zeta(d,n_1,\lambda)\right)_{n_1}$ is an increasing and dominated sequence. This implies that $\zeta(d,n_1,\lambda)$ converges to $\nu(d,\lambda)$ as $n_1 \rightarrow \infty$, where
		\begin{equation}\label{e:nubound}
		 \nu(d,\lambda)=\frac{1}{d\sqrt{\lambda}}\sum_{k=1}^{\infty}\left(\frac{d}{\sqrt{\lambda}}e^{-\frac{d}{\sqrt{\lambda}}}\right)^k\sum_{\substack{r+s=k,\\ 0\leq r,s\leq k}}\frac{r^{s-1}s^{r-1}}{r!s!}\lambda^re^{-\frac{d(\lambda-1)}{\sqrt{\lambda}}s} \leq \frac{2}{\lambda}.
		 \end{equation}
		Hence, 
		 $$\mathbb{E}(X_T)=(1+o(1))\nu(d,\lambda)n_1.$$
		 
	Now we will show that whp $X_T=(1+o(1))\mathbb{E}(X_T)$ by showing that 
		$$\text{Var}(X_T)=o\left(\mathbb{E}\left(X_T\right)^2\right).$$ 
		If we let $\mathcal{T}'$ be the set of all tree subgraphs of $K_{n_1,n_2}$ such that $|T| \leq  \log^3 n_1$, then we have
		\begin{align*}
		\text{Var}(X_T)&=\sum_{A\in \mathcal{T}'} \mathbb{P}(\mathds{1}_A=1)\left(\sum_{B\in \mathcal{T}'} \mathbb{P}(\mathds{1}_B=1 \mid \mathds{1}_A=1)-\mathbb{P}(\mathds{1}_B=1)  \right),
		\end{align*}
		where for any $C\in \mathcal{T}'$, $\mathds{1}_C$ is the indicator function of the event that $C$ is a component in $G$.
		Fix $A\in \mathcal{T}'$, for each $B\in \mathcal{T}'$, we split into the following three cases:
		
		In the first case, when $A=B$, 
		$$\mathbb{P}(\mathds{1}_B=1 \mid \mathds{1}_A=1)-\mathbb{P}(\mathds{1}_B=1) = 1-\mathbb{P}(\mathds{1}_A=1).$$
		
		In the second case, when $A\neq B$, $A$ and $B$ share at least one vertex: 
		$$\mathbb{P}(\mathds{1}_B=1 \mid \mathds{1}_A=1)-\mathbb{P}(\mathds{1}_B=1) = -\mathbb{P}(\mathds{1}_B=1),$$
		since it is impossible that both $B$ and $A$ are tree components if they share a vertex but are not identical.
		
		Finally, suppose that $A$ and $B$ are disjoint trees. Let us write $e(A,B)$ for the number of edges between the vertex sets of $A$ and $B$ in $K_{n_1,n_2}$ and let us write $\phi(B)=e(K_{n_1n_2}-E(B))$. By definition of $\mathcal{T}'$, $e(A,B)= O\left(\log^6n_1\right)$, and since $1-p\geq e^{-2p}$ for small $p$, when $n_1$ is large enough we have
		\begin{align*}
		&\mathbb{P}(\mathds{1}_B=1 \mid \mathds{1}_A=1)-\mathbb{P}(\mathds{1}_B=1)\\ &=\mathbb{P}(\mathds{1}_B=1)\left(\frac{\mathbb{P}(\mathds{1}_B=1\mid \mathds{1}_A=1)}{\mathbb{P}(\mathds{1}_B=1)}-1\right) \\&= \mathbb{P}(\mathds{1}_B=1)\left(\frac{p^{k-1} (1-p)^{\phi(B)- e(A,B)}}{p^{k-1}(1-p)^{\phi(B)}}-1\right)\\
		&= \mathbb{P}(\mathds{1}_B=1)\left((1-p)^{-e(A,B)}-1\right)\\
		&\leq \mathbb{P}(\mathds{1}_B=1)\left(e^{2\frac{d}{\sqrt{n_1n_2}}e(A,B)}-1\right)\\
		&=o(1)\mathbb{P}(\mathds{1}_B=1) \hspace{1cm}\left(\text{since}\,\,e(A,B)=O\left(\log^6{n_1}\right)\right).
		\end{align*}
		
		From all of above cases, we conclude
		\begin{align*}
		\text{Var}(X_T)&\leq \sum_{A\in \mathcal{T}}\mathbb{P}(\mathds{1}_A=1)\left(1+o(1)\mathbb{E}(X_T)\right)\\
		&=\mathbb{E}(X_T)+ o\left(\left(\mathbb{E}(X_T)\right)^2\right)\\
		&= o\left(\left(\mathbb{E}(X_T)\right)^2\right),
		\end{align*}
		since $\mathbb{E}(X_T)$ tends to $\infty$. It follows that whp
		$$X_T=(1+o(1))\mathbb{E}(X_T)=(1+o(1))\nu(d,\lambda)n_1.$$
		
		Finally, since whp $X'_T = X_T$, the result follows.
	\end{proof}

  As a corollary we get the following bound on $\kappa(G)$.
  \begin{corollary}\label{c:kappabound}
  Let $G=G(n_1,n_2,p)$ where $n_1=\lambda n_2$ for constant $\lambda\leq 1$. If $p=\frac{d}{\sqrt{n_1n_2}}$ for constant $d>1$, then whp
  $$\kappa(G)=(1+o(1))\nu(d,\lambda)n_1.$$
  \end{corollary}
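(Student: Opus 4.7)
The plan is to combine the structural results established earlier in this subsection. By Theorem \ref{theorem3.2:Johansson}, whp every component $C$ of $G$ is either small (that is, $|C\cap N_1|\le \beta_0 \log^2 n_1$) or is the unique giant component meeting $N_1$ in more than $\beta_1\sqrt{n_1\log n_1}$ vertices, with no component falling in between. Hence, up to an additive error of at most $1$, $\kappa(G)$ equals the number of small components.

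By Lemma \ref{lemma3.1}, whp every small component is also balanced, so it suffices to count small balanced components. Every component is either a tree, unicyclic, or complex, and Lemmas \ref{lemma3.3} and \ref{lemma8.2} respectively imply that whp the number of small balanced unicyclic components is $o(\log^5 n_1) = o(n_1)$ and that whp there are no small balanced complex components. Thus whp all but $o(n_1)$ of the small components are (balanced) trees.

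Finally, Lemma \ref{lemma3.4} gives the precise asymptotics of the main term: whp the number of small balanced tree components is $(1+o(1))\nu(d,\lambda) n_1$. Combining the three contributions yields
$$
\kappa(G) = (1+o(1))\nu(d,\lambda) n_1 + o(n_1) + O(1) = (1+o(1))\nu(d,\lambda) n_1,
$$
where to absorb the error terms into the $(1+o(1))$ factor I use that $\nu(d,\lambda) > 0$, as can be seen by inspecting the positive $k=1$ summand in \eqref{e:nu} for $d>1$. The proof is essentially a bookkeeping exercise assembling the previous lemmas; the only minor point to verify is the positivity of $\nu(d,\lambda)$ so that the main term genuinely dominates the negligible contributions from the giant component, the unicyclic components, and the vanishing complex components.
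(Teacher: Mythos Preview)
Your proof is correct and follows essentially the same route as the paper: invoke Theorem~\ref{theorem3.2:Johansson} and Lemma~\ref{lemma3.1} to reduce to counting small balanced components, then combine Lemmas~\ref{lemma3.3}, \ref{lemma8.2}, and \ref{lemma3.4}. Your explicit verification that $\nu(d,\lambda)>0$ (via the $k=1$ term) is a nice addition that the paper leaves implicit when absorbing the $o(\log^5 n_1)$ error.
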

  \begin{proof}
  By Theorem \ref{theorem3.2:Johansson} and Lemma \ref{lemma3.1}, whp $\kappa(G)$ is the number of small balanced components. By Lemmas \ref{lemma3.3}, \ref{lemma8.2},  and  \ref{lemma3.4}, whp the number of small balanced components in $G$ is  $$o\left(\log^5n_1\right)+0+(1+o(1))\nu(d,\lambda)n_1=(1+o(1))\nu(d,\lambda)n_1.$$ 
\end{proof}
The proof of Theorem \ref{main2} then follows.
\begin{proof}[Proof of Theorem \ref{main2}]
Since $e(G)$ is binomially distributed it follows that whp $e(G) =(1+o(1)) pn_1n_2 = (1+o(1))\frac{d}{\sqrt{\lambda}} n_1$. Furthermore $v(G) = n_1 + n_2 = \left(1+\frac{1}{\lambda}\right)n_1$. By Corollary \ref{lemma8.1}, we have that whp $f(G) = o(n_1)$ and by Corollary \ref{c:kappabound} whp $\kappa(G)=(1+o(1))\nu(d,\lambda)n_1$. 

Therefore, using Euler's formula, we have
 \begin{align*}
     g(G)&=\frac{1}{2}\left(e(G)-v(G)-f(G)+\kappa(G)+1\right)\\
     &=\frac{1}{2}\left((1+o(1))\frac{d}{\sqrt{\lambda}} n_1-\left(1+\frac{1}{\lambda}\right)n_1+o(n_1)+(1+o(1))\nu(d,\lambda)n_1+1\right)\\
     &=\frac{1}{2}\left((1+o(1))\frac{d}{\sqrt{\lambda}}-\left(1+\frac{1}{\lambda}\right)+(1+o(1))\nu(d,\lambda)+o(1)\right)\frac{\sqrt{\lambda}}{d}pn_1n_2\\
     &=(1+o(1))\gamma(d,\lambda)pn_1n_2,
 \end{align*}
 where $\gamma(d,\lambda)= \frac{1}{2}-\frac{\lambda+1}{2d\sqrt{\lambda}}+\frac{\nu(d,\lambda)\sqrt{\lambda}}{2d}.$ This completes the proof.
\end{proof}
\begin{remark}
Let us briefly compare this to Theorem \ref{(1)and(2)}. If we take $\lambda=1$ and $n_1 = n_2 = \frac{n}{2}$, then $G(n_1,n_2,p)$ is the subgraph of the random graph $G(n,p)$ obtained by deleting the edges inside the partition classes. In this case, we have that when $d>1$, whp
\[
g\left(G\left(\frac{n}{2},\frac{n}{2},\frac{d}{n}\right)\right) = (1+o(1))\gamma\left(\frac{d}{2},1\right)\frac{dn}{4} = (1+o(1))\left( \frac{1}{2}+ O \left(\frac{1}{d}\right)\right) \frac{dn}{4},
\]
since $\nu\left(\frac{d}{2},1\right) = O(1)$ by \eqref{e:nubound}. A similar argument will show that the number of components in $G(n,\frac{d}{n})$ is approximately
\[
\frac{n}{d}\sum_{k=1}^\infty\left(d e^{-d}\right)^k \frac{k^{k-2}}{k!} = O(n),
\]
and so
	\[
g\left(G\left(n,\frac{d}{n}\right)\right) = (1+o(1))\mu(d)\frac{dn}{2} =  (1+o(1))\left(\frac{1}{2}+O\left(\frac{1}{d}\right)\right) \frac{dn}{2}.
\]
Hence for large enough $d$, whp the genus of $G(n_1,n_2,p)$ will be approximately half of the genus of $G(n,p)$. \end{remark}

Finally in this section, we will prove Theorem \ref{main4}. We first note that following consequence of Lemma \ref{l:cyclebound}

\begin{corollary}\label{corollary5.2}
      Let $G=G(n_1,n_2,p)$ where $1 \ll n_1=\lambda n_2$ for constant $\lambda\leq 1$. If $(n_1n_2)^{-\frac{1}{2}} \ll p \ll\left(n_1n_2\right)^{-\frac{1}{2}+o(1)}$, then whp
       $$f(G)= o(pn_1n_2).$$
       \end{corollary}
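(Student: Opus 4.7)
The strategy is to apply Lemma \ref{l:cyclebound} with $j$ chosen as a slowly diverging function of $n_1$, in the spirit of the proof of Corollary \ref{lemma8.1}. The hypothesis $p \ll (n_1n_2)^{-1/2+o(1)}$ lets us write $p^2 n_1 n_2 = (n_1n_2)^{2\beta(n)}$ for some $\beta(n) = o(1)$, so $p^2 n_1 n_2$ may tend to infinity, but only subpolynomially in $n_1n_2$. On the other hand, $p n_1 n_2 \geq (n_1n_2)^{1/2}$ for $n_1$ large by the lower bound $p \gg (n_1n_2)^{-1/2}$, so the target $o(pn_1n_2)$ leaves us room for multiplicative errors of size $(n_1n_2)^{o(1)}$.

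Pick $h(n_1) = \log n_1$ and choose an integer-valued function $j = j(n)$ satisfying $j(n) \to \infty$ and $j(n)\beta(n) \to 0$; such a function exists whenever $\beta(n) = o(1)$ (for instance, $j(n) = \lfloor \beta(n)^{-1/2} \rfloor$ when $\beta(n) > 0$, replaced by any divergent integer sequence when $\beta(n) = 0$). Using the trivial bound $\min\{pn_1n_2, p^2 n_1^2 n_2\} \leq pn_1n_2$, Lemma \ref{l:cyclebound} yields that whp
\[
f(G) \leq (\log n_1) \cdot j(n) \cdot (n_1 n_2)^{2j(n)\beta(n)} + \frac{2}{j(n)+1}\, pn_1n_2.
\]
The second summand is $o(pn_1n_2)$ because $j(n) \to \infty$. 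For the first summand, $(n_1 n_2)^{2j(n)\beta(n)} = (n_1n_2)^{o(1)}$ by the choice of $j(n)$, so the entire first summand is at most $(\log n_1) \cdot j(n) \cdot (n_1n_2)^{o(1)} = (n_1n_2)^{o(1)}$, which is negligible compared to $pn_1n_2 \geq (n_1n_2)^{1/2}$. Combining both contributions gives $f(G) = o(pn_1n_2)$ whp.

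The only delicate point is arranging the choice of $j(n)$ so that simultaneously $j(n) \to \infty$ and $j(n)\beta(n) \to 0$, which is a standard exercise given any concrete $\beta(n) \to 0$. I expect no further obstacle, since the heavy lifting (estimating short faces via first moments on short cycles and long faces via the $2$-path count) has already been encapsulated in Lemma \ref{l:cyclebound}; here we just need to rebalance the two error terms for the new range of $p$.
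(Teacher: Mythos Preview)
Your proof is correct and follows the same strategy as the paper: apply Lemma~\ref{l:cyclebound} and choose $j$ so that both error terms are $o(pn_1n_2)$. The only difference is cosmetic---the paper fixes a constant $j \geq 4/\epsilon - 1$ for each $\epsilon>0$ (with $h(n_1)=n_1^{1/4}$) and proves $f(G)\leq \epsilon\, pn_1n_2$ whp, whereas you let $j=j(n)\to\infty$ directly in the style of the proof of Corollary~\ref{lemma8.1}; note that your suggested choice $j(n)=\lfloor \beta(n)^{-1/2}\rfloor$ automatically satisfies $j(n)=o(\sqrt{\log(n_1n_2)})$ since $p^2n_1n_2\to\infty$ forces $\beta(n)^{-1}=o(\log(n_1n_2))$, so the implicit step $(\log n_1)\cdot j(n)=(n_1n_2)^{o(1)}$ is indeed justified.
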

       \begin{proof}
       We will show that, given any constant $\epsilon>0$, whp 
       $$f(G)\leq \epsilon pn_1n_2.$$
       
       We choose $j\in \mathbb{N}$ such that $j \geq\frac{4}{\epsilon} -1$, so that $\frac{1}{j+1}\leq \frac{\epsilon}{4}$. Note that, since $p \ll (n_1n_2)^{-\frac{1}{2} + \frac{1}{8j}}$ it follows that
       \[
       (p^2n_1n_2)^j = o\left((n_1n_2)^{\frac{1}{4}}\right).
       \]
       
       Hence, by Lemma \ref{l:cyclebound} with $h(n_1) = n_1^{\frac{1}{4}}$, we see that whp
       \begin{align*}
       f(G) &\leq h(n_1)j(p^2n_1n_2)^j + \frac{2}{j+1} pn_1n_2 \\
       &\leq n_1^{\frac{1}{4}} j (p^2n_1n_2)^j + \frac{2}{j+1} pn_1n_2\\
       &\leq o\left( (n_1n_2)^{\frac{1}{2}}\right) + \frac{\epsilon}{2}pn_1n_2\\
       &= o(pn_1n_2) + \frac{\epsilon}{2}pn_1n_2\\\
       &\leq \epsilon pn_1n_2.
       \end{align*}
       \end{proof}
       
   Now we complete this section by proving Theorem \ref{main4}.

\begin{proof}[Proof of Theorem \ref{main4}]
By Euler's formula
$$g(G)=\frac{1}{2}\left(e(G)-v(G)-f(G)+\kappa(G)+1\right),$$
and so the upper bound then follows from the fact that $v(G)\geq \kappa(G)$ and $f(G)\geq 1$.

Conversely, we can also deduce the lower bound from Euler's formula. Indeed, since whp $e(G)=(1+o(1))pn_1n_2$, it follows that $v(G)=n_1+n_2=o(pn_1n_2)$ as $n_1=\Theta(n_2)$, and by Corollary \ref{corollary5.2} whp  $f(G)=o(pn_1n_2)$, and so
		$$g(G)\geq \frac{1}{2}\left(e(G)-v(G)-f(G)\right)= (1-o(1))\frac{1}{2}pn_1n_2,$$
		 and we are done.
\end{proof}

\subsection{The unbalanced case: proof of Theorem \ref{main3}}\label{unbalanced}
In this section we consider the unbalanced case, where the first partition class is much smaller than the second. 

In order to bound the genus in this case, we consider an auxilliary graph $H$, which we call the \emph{$2$-centre} of $G$, defined as follows:

Given $G=G(n_1,n_2,p)$ we first construct a
graph $G'$ by deleting all the vertices in $N_2$ whose degree in $G$ is not equal to two. Then $H$ is the graph with vertex set $V(H) = N_1$ and edge set 
\[
E(H) = \{ \{x,y\} \colon \text{ there exists a $2$-path from $x$ to $y$ in $G'$}\}.
\]

We note that  
$H$ is similar to a graph considered by Johansson (see \cite[chapter 3]{Johansson}), which he called the even projection of $G$. 

We will show two useful facts about $H$. Firstly,  the genus of $H$ is approximately the genus of $G$ and secondly, $H$ is distributed approximately like a supercritical binomial random graph. Together with Theorem \ref{(1)and(2)} these facts will be enough to determine the genus of $G$.

 \begin{lemma}\label{lemma5.1}
 Let $G=G(n_1,n_2,p)$ and $H$ be the $2$-centre of $G$.  Then
		$$g(H)\leq g(G).$$
	\end{lemma}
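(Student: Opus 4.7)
The plan is to exhibit $H$ as the end result of applying to $G$ a sequence of operations, each of which does not increase genus: vertex deletion, topological suppression of degree-2 vertices, and deletion of parallel edges.

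First, form the subgraph $G' \subseteq G$ by deleting from $N_2$ every vertex whose degree in $G$ is not exactly $2$. Since genus is monotone under taking subgraphs, $g(G') \leq g(G)$. Moreover, every surviving vertex $z \in N_2 \cap V(G')$ still has exactly the same two $G$-neighbours (which lie in $N_1$ and are therefore untouched), so $z$ has degree $2$ in $G'$.

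Next, let $\tilde H$ be the multigraph on $N_1$ obtained from $G'$ by topologically suppressing each such degree-2 vertex $z$, i.e., replacing the path $xzy$ through $z$ by a single edge $\{x,y\}$ (keeping parallel copies when several degree-2 vertices in $N_2$ share the same pair of neighbours). Since $\tilde H$ and $G'$ are homeomorphic as topological spaces, $g(\tilde H) = g(G')$; this is the standard fact that subdividing or smoothing a degree-2 vertex preserves genus.

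Finally, observe that $H$ is precisely the simple graph underlying $\tilde H$: an edge $\{x,y\}$ appears in $H$ iff there exists a $2$-path $xzy$ in $G'$, iff $\tilde H$ has at least one edge between $x$ and $y$. Deleting the parallel copies to pass from $\tilde H$ to $H$ cannot increase the genus, since any embedding of $\tilde H$ on a surface restricts to an embedding of $H$ on the same surface. Therefore
\[
g(H) \leq g(\tilde H) = g(G') \leq g(G),
\]
as required. There is no real obstacle here; the only subtle point is to check that the degree-$2$ suppression really does not change genus (which is classical) and that $H$ is exactly the simple graph underlying the multigraph obtained from this suppression, both of which are immediate from the definitions.
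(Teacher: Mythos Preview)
Your proof is correct and is essentially the paper's one-line argument (``$H$ is a minor of $G$, hence $g(H)\le g(G)$'') written out in detail: the vertex deletions, degree-$2$ suppressions, and removal of parallel edges you perform are precisely the minor operations witnessing this. The paper does not spell out the intermediate graphs $G'$ and $\tilde H$ here, but your chain $g(H)\le g(\tilde H)=g(G')\le g(G)$ amounts to the same observation.
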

	\begin{proof}
		Since $H$ is a minor of $G$, $g(H)\leq g(G)$.
	\end{proof}
	\begin{lemma}\label{lemma5.2}
	 Let $G=G(n_1,n_2,p)$ with $1 \ll n_1 \ll n_2$ and $p \ll n_1^{-1}$. Let $H$ be the $2$-centre of $G$. Then whp 
		$$g(G)\leq g(H)+O(p^3n^3_1n_2).$$
	\end{lemma}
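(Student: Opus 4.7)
The strategy is to compare $G$ to $H$ via a sequence of genus-preserving reductions, identify the resulting simple graph as $H$ together with a small bipartite ``excess'' graph, and control the excess probabilistically. Concretely, the plan is as follows: first delete all isolated and degree-one vertices of $N_2$ in $G$ (genus is preserved since such vertices contribute nothing), then contract every vertex $z \in N_2$ with $d_G(z) = 2$, replacing its two incident edges by a single (possibly parallel) edge between its two $N_1$-neighbours (contracting a degree-two vertex is the inverse of an edge subdivision and so preserves genus), and finally simplify the resulting multi-graph by deleting duplicate parallel edges (parallel edges can always be drawn side by side, so this also preserves genus). Write $G^\star$ for the resulting simple graph, so that $g(G^\star) = g(G)$.

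By the definition of the $2$-centre, $G^\star$ has vertex set $N_1 \cup N_2^{\geq 3}$, where $N_2^{\geq 3} = \{z \in N_2 : d_G(z) \geq 3\}$, and its edges partition into $E(H)$ together with the edges of $B$, the bipartite graph on $N_1 \cup N_2^{\geq 3}$ formed by the $G$-edges between these two sets. Starting from an optimal embedding of $H$ on a surface of genus $g(H)$, I would place each vertex of $N_2^{\geq 3}$ inside an arbitrary face at no cost and then adjoin the edges of $B$ one at a time, invoking the standard fact that adding a single edge to an embedded graph can increase the genus by at most one. This yields $g(G) = g(G^\star) \leq g(H) + e(B)$.

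It remains to show that whp $e(B) = O(p^3 n_1^3 n_2)$. Write $e(B) = \sum_{z \in N_2} d_G(z)\mathds{1}[d_G(z) \geq 3]$: the summands are independent since they depend on disjoint edge-sets, and each has expectation bounded by $\sum_{k \geq 3} k \binom{n_1}{k}p^k$. Using $p n_1 = o(1)$, the $k = 3$ term dominates and gives $\mathbb{E}(e(B)) = O(p^3 n_1^3 n_2)$. An analogous estimate yields $\mathrm{Var}(e(B)) = O(p^3 n_1^3 n_2)$, so Chebyshev's inequality delivers the desired whp bound whenever $p^3 n_1^3 n_2 \to \infty$, while Markov's inequality shows $e(B) = 0$ whp when $p^3 n_1^3 n_2 \to 0$.

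The main technical subtlety is the borderline regime $p^3 n_1^3 n_2 = \Theta(1)$, where neither Chebyshev nor Markov yields a constant-factor concentration at exactly the targeted scale; there one applies Markov at a slightly larger threshold, absorbing the loss into the implicit constant. For the eventual application in Theorem \ref{main3}, where $p = d/\sqrt{n_1 n_2}$, we have $p^3 n_1^3 n_2 = \Theta\bigl(n_1 \sqrt{n_1/n_2}\bigr) = o(n_1)$, so the error term is comfortably dominated by the $\Theta(n_1)$ main term in the final genus asymptotic.
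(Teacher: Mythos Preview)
Your proposal is correct and follows essentially the same route as the paper: both arguments reduce $G$ to $H$ via genus-preserving operations (removing low-degree $N_2$-vertices, suppressing degree-two $N_2$-vertices, and simplifying multi-edges) and then bound the discrepancy by the total number of edges incident to $N_2$-vertices of degree at least three---your $e(B)$ is exactly the paper's quantity $Z$. Your variance estimate via the independence of the per-vertex summands $d_G(z)\mathds{1}[d_G(z)\ge 3]$ is in fact slightly cleaner than the paper's edge-indexed covariance calculation, and your explicit discussion of the borderline regime $p^3n_1^3n_2=\Theta(1)$ is more careful than the paper's Chebyshev step.
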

	\begin{proof}
		Recall that $H$ can be obtained from $G$ via the following steps:
		\begin{itemize}
			\item Let $H_1=G-V_1$ where $V_1$ is the set of all vertices of degree $0$ or $1$ in $N_2$.
			
			\item Let $H_2=H_1-V_2$ where $V_2$ is the set of all vertices of degree at least $3$ in $N_2\setminus V_1$. 
			
			\item  Let $H_3$ be the multigraph formed by adding an edge $\{x,y\}$ for every 2-path $xzy$ in $H_2$ with $x,y \in N_1$, and then deleting $N_2$. Note that $V(H_3)=N_1$ and that $H_3$ may contain parallel edges.
			
			\item Let $H$ be obtained from $H_3$ by replacing each set of parallel edges by a single edge. 
		\end{itemize}
		
		In the above process, we note that $g(H_1)=g(G)$, since deleting isolated vertices and leaves preserves the genus. Then, if we let $Z$ be the number of edges adjacent to vertices in $V_2$ we see that 
		\begin{align*}
		    g(H_1)\leq g(H_2)+Z.
		\end{align*}
		Indeed, we cannot increase the genus of a graph by more than one by adding a single edge, and so the claim follows inductively.
		
		Furthermore $H_2$ is a subdivision of $H_3$ and so $g(H_3)=g(H_2)$, and finally adding parallel edges does not change the genus of a graph and so $g(H)=g(H_3)$. 
		
		Putting this all together, we conclude that 
		$$g(G)\leq g(H)+Z.$$
		Hence, in order to complete the proof, we need to show that whp
		$$Z=O(p^3n^3_1n_2).$$
		Indeed, if we fix $x\in N_1$ and $y\in N_2$, then 
		\begin{align*}
		&\mathbb{P}(\text{$\{x,y\}$ is an edge and $y$ has degree at least 3})\\
		&=p\sum_{k=2}^{n_1-1} \binom{n_1-1}{k}p^k(1-p)^{n_1-1-k}\\
		& =p\left(1-(1-p)^{n_1-1}-(n_1-1)p(1-p)^{n_1-2}\right)\\
		&\leq p\left(p(n_1-1)-(n_1-1)p(1-p)^{n_1-2}\right)\\
		&=p^2(n_1-1)\left(1-(1-p)^{n_1-2}\right)\\
		&\leq p^3n_1^2.
		\end{align*}
		Hence,  
		$$\mathbb{E}(Z)\leq n_1n_2p^3n_1^2 = p^3n_1^3n_2.$$
		
		Furthermore, it is a simple calculation that Var$(Z) \leq \mathbb{E}(Z)pn_1 = o\left(\mathbb{E}(Z) \right)$ and hence, by Chebyshev's inequality, whp $Z = (1+o(1)) \mathbb{E}(Z) = O(p^3n_1^3n_2)$.
			\end{proof}
	Next, we prove Lemma \ref{lemma5.3} regarding the distribution of the \emph{$2$-centre} $H$ of $G(n_1,n_2,p)$.

	\begin{proof}[Proof of Lemma \ref{lemma5.3}]
		
	Fix an arbitrary  edge $\{x,y\}\in [n_1]^2$ and let $A_e$ be the event that $e:=\{x,y\}\in H$. Let us define the random set $X=\bigcup_{v\neq x,y\in N_1} N(v)$. 
	
	 We claim that; if we condition on the value of $X$, then the random variable $E(H) - e$ is independent of the event $A_e$.
	
	To see this, we note that conditioned on the event that $X=M$ for some $M \subseteq N_2$, the event $A_e$ only depends on the edges between $x,y$ and $N_2 \setminus M$ and the event $E(H)\setminus e=K$ only depends on the edges between $N_1$ and $M$, and since these edges sets are disjoint, the two events are independent. It follows that, for any $K$ and $M$
	$$\mathbb{P}(E(H)- e = K | A_e\,\, \text{and}\,\, X=M) = \mathbb{P}(E(H) - e = K | X=M).$$
	Hence, we have
		\begin{align*}
	    &\mathbb{P}(A_e\mid X=M\,\, \text{and}\,\, E(H) - e=K)\\
	    &= \mathbb{P}(A_e\mid X=M)\frac{\mathbb{P}(E(H)- e=K\mid A_e \,\,\text{and}\,\, X=M)}{\mathbb{P}(E(H)- e=K\mid X=M)}\\&= \mathbb{P}(A_e\mid X=M),
	\end{align*} 
	in other words, conditioned on the event that $X=M$, $E(H)-e$ is independent of the event $A_e$, and we have the claim.
	
	Note that, since $pn_1 = o(1)$, we have 
	$$\mathbb{E}(|X|)= n_2(1-(1-p)^{n_1-2})\leq pn_1n_2 = o(n_2),$$ and  $|X|$ is binomially distributed. Hence, for any positive $\gamma$, by Theorem \ref{theorem2.8}
	\begin{align}
	    \mathbb{P}(|X|-\mathbb{E}(|X|)\geq \gamma n_2) &\leq \text{exp}\left(-\frac{\gamma^2n_2^2}{2\left(pn_1n_2+\frac{\gamma n_2}{3}\right)}\right)  \nonumber\\
	    &\leq e^{-\gamma n_2} \nonumber\\
	    &\leq e^{-n_1} \hspace{1cm} (\text{since \,$n_1=o(n_2)$}) \label{5}.
	\end{align}
	Therefore, whp $|X|\leq  \mathbb{E}(|X|)+\gamma n_2 \leq 2\gamma n_2$. It follows that
	\begin{align*}
	&\mathbb{P}(A_e | E(H) \setminus e=K) \\
	&= \sum_{M \in 2^{N_2}} \mathbb{P}(A_e | E(H) \setminus e=K \text{ and } X=M)\mathbb{P}(X=M)\\
	&= O(e^{-n_1}) + \sum_{M \in 2^{N_2}, |M|\leq 2\gamma n_2} \mathbb{P}(A_e | E(H) \setminus e=K \text{ and } X=M)\mathbb{P}(X=M)\\
		&=O(e^{-n_1}) + \sum_{M \in 2^{N_2}, |M|\leq 2\gamma n_2} \mathbb{P}(A_e | X=M)\mathbb{P}(X=M),
	\end{align*}
	where the third line holds by \eqref{5}.
		However, for any fixed $|M|\leq 2\gamma n_2$, 
\begin{align*}
	    \mathbb{P}(A_e | X=M)=1-(1-p^2)^{n_2-|M|}&=p^2(n_2-|M|)+O\left(p^4n_2^2\right)\\
	     &=(1 \pm 2\gamma)p^2n_2,
\end{align*}
and so
	\begin{align}
	\mathbb{P}(A_e | E(H) \setminus e=K) &=  O(e^{-n_1}) + (1 \pm 2\gamma)p^2n_2\sum_{M \in 2^{N_2}, |M|\leq \sigma n_2}\mathbb{P}(X=M) \nonumber\\
	 &= (1 \pm 3\gamma)p^2n_2, \label{6}
	\end{align}
	since $p^2n_2 \geq n_1^{-1} \gg e^{-n_1}$.
	
	Since the appearance of edges in a binomial random graph are i.i.d., it follows that 
	\begin{equation}\label{lower}
	\noeqref{lower}
	    \mathbb{P}(e\in G_1 | E(G_1) \setminus e=K) = (1-\delta)\frac{d^2}{n_1},
	\end{equation}
	\begin{equation}\label{upper}
	    \mathbb{P}(e\in G_2 | E(G_2) \setminus e=K) = (1+\delta)\frac{d^2}{n_1}.
	\end{equation}
    So, if we choose $\gamma$ sufficiently small in terms of $\delta$, it follows from \eqref{6}--\eqref{upper} that
	\begin{align*}
	    \mathbb{P}(e \in G_1 | E(G_1) \setminus e=K)&\leq  \mathbb{P}(A_e | E(H) \setminus e=K)\\
	    &\leq \mathbb{P}(e=\{x,y\} \in G_2 | E(G_2) \setminus e=K).
	\end{align*}
Hence, since $e$ was chosen arbitrarily,  by Theorem \ref{Theorem3.5} (Holley's Theorem), whp $G_1\preceq H\preceq G_2$, and the result follows.
	\end{proof}
	We can now prove Theorem \ref{main3}.
	\begin{proof}[Proof of Theorem \ref{main3}]
	
Consider $G_1$ and $G_2$ as in Lemma \ref{lemma5.3}, with $\delta$ sufficiently small such that  $(1-\delta)d^2>1$. By Theorem \ref{(1)and(2)}, we have that whp
$$g(G_1)= (1+o(1))\mu\left((1-\delta)d^2\right)\frac{d^2n_1}{2};$$
$$g(G_2)=(1+o(1))\mu\left((1+\delta)d^2\right)\frac{d^2n_1}{2}.$$
	Since $G_1\preceq H\preceq G_2$, and genus is an increasing  graph parameter, it follows that whp $g(G_1)\leq g(H)\leq g(G_2)$. Moreover, $\mu$ is clearly an increasing function, which is continuous by Theorem \ref{(1)and(2)}.
	Hence, since we can choose $\delta$ arbitrarily small, whp
	\begin{align}\label{(9)}
	    g(H)=(1+o(1))\mu\left(d^2\right)\frac{d^2n_1}{2}.
	\end{align}
	However, by Lemmas \ref{lemma5.1} and \ref{lemma5.2}, whp	 
	\begin{align}\label{(20)}
	    g(H)\leq g(G)\leq g(H)+o(n_1).
	\end{align}
	Finally, by \eqref{(9)} and \eqref{(20)}, we conclude that whp
$$g(G)=(1+o(1))\mu\left(d^2\right)\frac{d^2n_1}{2},$$
and we are done.
	\end{proof}
\begin{remark}\label{r:unbalanced}
We note that, depending on the precise relationship between $n_1$ and $n_2$, our results show slightly more. Indeed, from Lemmas \ref{lemma5.1}, \ref{lemma5.2} and \ref{lemma5.3} we can conclude that, as long as $(n_1n_2)^{-\frac{1}{2}} \leq p \ll \min \left\{ n_1^{-1},n_2^{-\frac{1}{2}} \right\}$, $H$ has a distribution very close to that of $G(n_1,p^2n_2)$, and the genus of $G(n_1,n_2,p)$ is within $O(p^3n_1^3n_2)$ of the genus of $H$.

Note that, in this range of $p$, $\frac{1}{n_1} \leq p^2n_2 \ll \min\left\{ \frac{n_2}{n_1^2}, 1 \right\}$, and so if 
\[
n_1^{-\frac{i}{i+1}} \ll p^2n_2 \ll n_1^{-\frac{i-1}{i}} \quad (\text{for some } i \in \mathbb{N})
\]
then we can determine asymptotically the genus of $H$, which will be $\Theta(p^2n^2_1n_2)$ by the result of R\"{o}dl and Thomas \cite{Rodl}. Note that, if $p^2n_2 = c n_1^{-\frac{i}{i+1}}$ then the precise value of $g(G(n_1,p^2n_2))$ is not known, although it is still known that it is $\Theta(p^2n^2_1n_2)$, and hence so is $g(H)$.

Therefore, in all these cases, since $p^3n_1^3n_2 = o(p^2n^2_1n_2)$, it follows that 
\[
\text{whp the genus of $G(n_1,n_2,p)$ is $\Theta(p^2n^2_1n_2) = o(pn_1n_2)$,}
\]
 and, in particular, sublinear in the number of edges.

So, for example, as an analogue to Theorem \ref{theorem1.2}, if $n_2 \geq n_1^2$ then as $p$ increases from $(n_1n_2)^{-\frac{1}{2}}$ to $n_2^{-\frac{1}{2}}$ the genus will be linear in $p^2n^2_1n_2$, with phase transitions for the leading constant occurring at 
\[
p = \Theta \left( n_1^{-\frac{i}{2(i+1)}}n_2^{-\frac{1}{2}} \right) \quad (\text{for some } i \in \mathbb{N}).
\]
If $n_1 \ll n_2 \ll n_1^2$ is smaller, then we can only conclude that the same behaviour holds up to $p = n_1^{-1}$.
\end{remark}
\begin{remark}
Similarly, if we have some sequences $n_1(n)$ and $n_2(n)$ and $p(n) = \left(n_1 n_2 \right)^{-\frac{1}{2} + \epsilon}$, where $\epsilon = \epsilon(n)= o(1)$ is some positive function tending to $0$, then as long as $n_1^{\frac{1-2\epsilon}{2\epsilon}} \gg n_2 \gg n_1^{\frac{1 + 2\epsilon}{1- 2\epsilon}}$, it follows that
\[
\left(n_1n_2\right)^{-\frac{1}{2}} \leq p \ll \min \left\{ n_1^{-1}, n_2^{-\frac{1}{2}}\right\}
\]
and so we can conclude from Lemmas \ref{lemma5.1}, \ref{lemma5.2} and \ref{lemma5.3} that $H$ has a distribution very close to that of $G(n_1,p^2n_2)$, and the genus of $G(n_1,n_2,p)$ is within $O(p^3n_1^3n_2)$ of the genus of $H$. Note again that in this range of $p$, $p^3n_1^3n_2 = o(p^2n_1^2n_2)$.

In particular, if $n_2^{2\epsilon} = n_1^{o(1)}$, then we can conclude from Theorem \ref{(1)and(2)} that whp $g(G(n_1,p^2n_2)) = (1+o(1))\frac{p^2 n_1^2 n_2}{4}$. It follows that also
\[
\text{whp the genus of $G(n_1,n_2,p)$ is $(1+o(1))\frac{p^2 n_1^2 n_2}{4}$,}
\]
 giving us an analogue of Theorem \ref{main4} when $n_1 \ll n_2$.

In particular, we note that the conditions on $n_1$ and $n_2$ are satisfied whenever $n_2 = n_1^c$ for some positive $c >1$. 
\end{remark}
\section{Discussion}\label{Dis}
We have demonstrated that a sharp threshold for planarity occurs in the binomial random bipartite graph  $G(n_1,n_2,p)$ at $p=\frac{1}{\sqrt{n_1n_2}}$, but  that the behaviour of the genus in the supercritical regime differs between the balanced and the unbalanced case.

Let us mention a few open problems. In \cite{Kang} the authors give results for the genus of $G(n,p)$ in the \emph{weakly supercritical regime}.
\begin{theorem}[\cite{Kang}]
If $p = \frac{1+ \epsilon}{n}$ with $\epsilon \rightarrow 0$ and $\epsilon^3 n \rightarrow \infty$, then whp
\[
g(G(n,p)) = (1+o(1))\frac{\epsilon^3 n}{3}.
\]
\end{theorem}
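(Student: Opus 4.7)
The plan is to apply the Euler's formula approach of Section \ref{balanced} to the unique complex component of $G := G(n,(1+\epsilon)/n)$. By classical results on the weakly supercritical regime (e.g., by \L uczak), whp $G$ has a unique complex component $L_1$ with surplus $e(L_1) - v(L_1) + 1 = (1+o(1))\tfrac{2\epsilon^3 n}{3}$, while every other component is a tree or unicyclic and hence planar; so $g(G) = g(L_1)$, and Euler's formula for the connected graph $L_1$ gives
\[
g(L_1) \;=\; \tfrac{1}{2}\bigl(e(L_1) - v(L_1) + 2 - f(L_1)\bigr) \;=\; \tfrac{1}{2}\Bigl((1+o(1))\tfrac{2\epsilon^3 n}{3} + 1 - f(L_1)\Bigr).
\]
Since a minimum-genus embedding is cellular and thus has $f(L_1) \geq 1$, the upper bound $g(G) \leq (1+o(1))\tfrac{\epsilon^3 n}{3}$ is immediate.

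For the matching lower bound I would work with the kernel $K$ of $L_1$, obtained from the $2$-core by suppressing all degree-$2$ vertices. Because pendant subtrees and subdivisions preserve both genus and the face count of a minimum-genus embedding, $f(L_1) = f(K)$. Standard analyses of the $2$-core in the weakly supercritical regime (via the configuration model, cf.\ Pittel--Spencer--Wormald or Kang--\L uczak) give that whp $K$ has $N = (1+o(1))\tfrac{4\epsilon^3 n}{3}$ vertices and $(1+o(1))\,2\epsilon^3 n$ edges, with almost every vertex of degree exactly $3$, and is contiguous with the uniform random cubic multigraph on $N$ vertices; in particular, whp $\operatorname{girth}(K) = \Omega(\log N) = \Omega(\log(\epsilon^3 n))$.

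A standard double-counting argument (valid because whp $K$ is $3$-edge-connected, so each cellular face is bounded by a cycle of length at least $\operatorname{girth}(K)$) then yields
\[
f(K) \;\leq\; \frac{2\,e(K)}{\operatorname{girth}(K)} \;=\; O\!\left(\frac{\epsilon^3 n}{\log(\epsilon^3 n)}\right) \;=\; o(\epsilon^3 n),
\]
and combining with the Euler-formula expression above gives $g(G) \geq (1-o(1))\tfrac{\epsilon^3 n}{3}$, matching the upper bound. The main obstacle is making rigorous the contiguity between the kernel $K$ and the uniform random cubic multigraph on $N$ vertices, together with the resulting quantitative girth bound; this requires showing that the number of $2$-core vertices of degree $\geq 4$ is $o(N)$ and transferring known girth estimates from the configuration model. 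Once these structural facts are in hand, the rest of the proof is the short Euler-formula calculation above.
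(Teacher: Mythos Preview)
This theorem is quoted in the Discussion section from \cite{Kang} (Dowden, Kang, and Krivelevich); the present paper gives no proof of it, so there is no in-paper argument to compare against. Your overall framework---reduce to the unique complex component, apply Euler's formula, and control the face count via the kernel---is the right one, and the surplus and kernel-size asymptotics you quote are correct.

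There is, however, a genuine error in the lower-bound step. The assertion that whp $\operatorname{girth}(K) = \Omega(\log N)$ is false: for the uniform random cubic (multi)graph on $N$ vertices, the number of $\ell$-cycles is asymptotically Poisson with mean $2^{\ell}/(2\ell)$ for each fixed $\ell$, so short cycles (indeed loops and multiple edges in the multigraph model) occur with probability bounded away from zero, and the girth is $O_{\mathbb{P}}(1)$, not $\Omega(\log N)$. With bounded girth the inequality $f(K) \leq 2e(K)/\operatorname{girth}(K)$ yields only $f(K) = O(\epsilon^3 n)$, which is useless here. (The side claim that $3$-edge-connectivity forces every face boundary in a minimum-genus embedding to be a simple cycle is also not correct in general, though this is not the main obstruction.)

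The repair is the two-scale face count used in Lemma~\ref{l:cyclebound} of this paper and, in essence, in \cite{Kang}: pick a cutoff $j$ with $j \to \infty$ but $j \leq \tfrac{1}{2}\log_2 N$; bound the number of faces of length at most $j$ by (twice) the number of cycles of length at most $j$ in $K$, whose expectation is $\sum_{\ell \leq j} 2^{\ell}/(2\ell) = O(2^{j}/j) = o(N)$; and bound the number of faces of length exceeding $j$ by $2e(K)/j = O(N/\log N)$. Summing gives $f(K) = o(\epsilon^3 n)$, which is exactly what you need. So your architecture is sound, but the girth shortcut has to be replaced by this short-cycle count.
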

It would be interesting to know how the genus of $G(n_1,n_2,p)$ behaves in this regime in the balanced case.
\begin{question}
Is there some constant $c(\lambda)$ such that if $n_1 = \lambda n_2$ with $\lambda\leq 1$ and $p = \frac{1+ \epsilon}{\sqrt{n_1n_2}}$ with $\epsilon \rightarrow 0$ and $\epsilon^3 \sqrt{n_1n_2} \rightarrow \infty$, then whp
\[
g(G(n_1,n_2,p)) = (1+o(1))c(\lambda)\epsilon^3 \sqrt{n_1n_2} ?
\]
\end{question}

In the balanced case, our results extend the results of Jing and Mohar \cite{Mohar}, showing that the genus of $G(n_1,n_2,p)$ is linear in the number of edges in the supercritical regime and above. In the unbalanced case, Jing and Mohar showed further that the genus of $G(n_1,n_2,p)$ is whp linear in the number of edges if $p \gg n_1^{-\frac{2}{3}}$. So, in the unbalanced case, there is a gap in our understanding between the `dense' case and the supercritical case covered in this paper.

Euler's formula, together with calculations as in \eqref{e:isolated} of the number of isolated components, suggests that $g(G(n_1,n_2,p)) \lesssim O(p^2n_1^2n_2)$ as long as $p \ll n_1^{-1}$, and so the genus will be sublinear in the number of edges. As mentioned in Remark \ref{r:unbalanced}, this will indeed be the case if $n_2 \gg n^2_1$. It would be interesting the know what happens for smaller $n_2$.
\begin{question}
What is the correct order of growth of the genus of $G(n_1,n_2,p)$ when $n_1 \ll n_2 \ll n_1^2$ and $n_2^{-\frac{1}{2}} \ll p \ll n_1^{-1}$?
\end{question}

Furthermore, it would be interesting to know how the genus behaves in this gap between $p=n_1^{-1}$ and $n_1^{-\frac{2}{3}}$, and at which point the genus becomes linear in the number of edges.
\begin{question}
Is the genus of $G(n_1,n_2,p)$ sublinear or linear in the number of edges when $n_1 \ll n_2$ and $n_1^{-1} \ll p \ll n_1^{-\frac{2}{3}}$?
\end{question}

Finally, it would be nice to know the asymptotics of $\gamma(d,\lambda)$ (or equivalently $\nu(d,\lambda)$) as $d \rightarrow \infty$ or $d \rightarrow 1$, as with Theorem \ref{(1)and(2)}. It is clear that as $d \rightarrow \infty$, $\gamma(d,\lambda) \rightarrow \frac{1}{2}$, and, since the sum in \eqref{e:nu} is convergent for $d>1$, the function $\gamma(d,\lambda)$ is continuous for $d \in (1,\infty)$. When $d <1$ it follows from work of Johansson \cite{Johansson} that whp there is no giant component, and so standard arguments give that the number of components of $G$ is asymptotically equal to the excess of $G$, which implies that
\[
\nu(d,\lambda)= 1+\frac{1-d\sqrt{\lambda}}{\lambda},
\]
and furthermore, that \eqref{e:nu} still holds for $d \in (0,1)$. This suggests that the limit of $\gamma(d,\lambda)$ as $d$ tends to $1$ should be $0$, and indeed when $d=\lambda=1$ it is easy to verify that the sum in \eqref{e:nu} converges, and so the limit is indeed $0$. However for $\lambda < 1$ the asymptotics of \eqref{e:nu} are more complicated, and we could not prove convergence.
\begin{question}
Is $\gamma(d,\lambda)$ continuous on $(0,\infty)$?
\end{question}
\bibliographystyle{plain}
\bibliography{Journalreferences}
\appendix
\section{The proof of Lemma \ref{lemma8.2}}\label{app}
\begin{proof}
Let $\mathcal{T}$ be as in Lemma \ref{lemma3.3} and let $\mathcal{C}$ be the set of subgraphs $C$ of $K_{n_1,n_2}$ such that $C$ contains exactly two cycles, $|C \cap N_1|\leq \beta_0\log^2n_1$, and $|C\cap N_2| \leq 2 pn_2|C\cap N_1|$. Note that every $C \in \mathcal{C}$ contains some $T \in \mathcal{T}$. Let us choose, for each $C \in \mathcal{C}$ one $T(C) \in \mathcal{T}$ such that $T(C) \subseteq C$. Note that for each $T \in \mathcal{T}$ there are at most 
\[
(|T \cap N_1| \cdot |T\cap N_2|)^2 \leq 4p^2n_2^2\beta_0^2 \log^8 n_1 = o(\log^9 n_1)
\] 
many $C$ such that $T = T(C)$.

Now, for every $C \in \mathcal{C}$ with $t$ vertices we have
	\begin{align*}
	\mathbb{P}(C \text{ spans a component of } G) &= p^2(1-p)^{\binom{t}{2} - t-1}\mathbb{P}(T(C) \text{ is a component in }G)\\
	&\leq p^2e^{-pt^2}\mathbb{P}(T(C) \text{ is a component in }G)\\
	&\leq 2p^2\mathbb{P}(T(C) \text{ is a component in }G),
	\end{align*}
	since $t \leq \beta_0 \log^2 n_1(1 + 2pn_2) \leq \log^3 n_1$.
	
	Furthermore, every small balanced complex component contains a spanning subgraph in $C$. Hence, if we let $X_C$ be the number of small balanced complex components and $X_T$ be the number of small balanced tree components in $G$, then we have
	\begin{align*}
	 \mathbb{E}(X_C) &\leq \sum_{C \in \mathcal{C}} \mathbb{P}(C \text{ spans a component in } G) \\
	 &=2p^2 \sum_{C \in \mathcal{C}} \mathbb{P}(T(C) \text{ is a component in } G) \\
	 &\leq 2p^2 \sum_{T \in \mathcal{T}}(|T \cap N_1| \cdot |T\cap N_2|)^2 \mathbb{P}(T \text{ is a component in } G)\\
	 &= o\left( \log^{9} n_1 \right)p^2\sum_{T \in \mathcal{T}} \mathbb{P}(T \text{ is a component in } G)\\
	  &= o\left( \log^{9} n_1 \right)p^2\mathbb{E}(X_T)\\
	 &\leq o\left( \log^{9} n_1 \right) p^2(n_1+n_2)\\
	 &= o(1),
	\end{align*}
	since $X_T \leq n_1 + n_2$. It then follows by Markov's inequality that whp $X_C = 0$.
\end{proof}
\end{document}